\numberwithin{equation}{section}
\DeclarePairedDelimiter\abs{\lvert}{\rvert}%
\newcommand{\real}{{\mathbb R}}
\newcommand{\complex}{{\mathbb C}}
\newcommand{\integer}{{\mathbb Z}}
\newcommand{\Natural}{{\mathbb N}}
\newcommand{\LL}{\mathcal{L}}
\newcommand{\UU}{\mathcal{U}}
\newcommand{\VV}{\mathcal{V}}
\newcommand{\NT}{{\overset{NT}{\rightarrow}}}
\newcommand{\Psiout}{\Psi_{\varphi,c_-}}
\newcommand{\rrlc}{rrl-continuation}
\newcommand{\gS}{\mathscr S}
\newcommand{\eps}{\epsilon}
\newcommand{\ph}{\varphi}          
\newcommand{\de}{\delta}
\newcommand{\ga}{\gamma}
\newcommand{\la}{\lambda}
\newcommand{\La}{\Lambda}
\newcommand{\sig}{\sigma}
\renewcommand{\th}{\theta}
\newcommand{\om}{\omega}
\newcommand{\col}{\colon}          
\newcommand{\pa}{\partial}
\newcommand{\ii}{^{-1}}
\newcommand{\ti}{\tilde}
\newcommand{\rhs}{{right-hand side}}
\newtheorem{proposition}{Proposition}[section]
\newtheorem{lemma}[proposition]{Lemma}
\newtheorem{theorem}[proposition]{Theorem}
\newtheorem{maintheorem}{Theorem}
\newtheorem{corollary}{Corollary}
\theoremstyle{remark}
\newtheorem{remark}[proposition]{Remark}
\theoremstyle{definition}
\begin{document}

\author{V. Baladi, S. Marmi, and D. Sauzin}
\title[Natural boundary for the susceptibility function]
{Natural boundary for the susceptibility function of generic piecewise expanding unimodal
maps}
\date{\today }

\address{V.~Baladi: D.M.A., UMR 8553, \'Ecole Normale Sup\'erieure,  75005 Paris, France.
Current address: University of Copenhagen, Department of Mathematical
Sciences, Universitetsparken 5, 2100 Copenhagen \o, Denmark}
\email{baladi@math.ku.dk}

\address{S.~Marmi: Scuola Normale Superiore, CNRS--UMI 3483 Fibonacci,
56126 Pisa, Italy}
\email{s.marmi@sns.it}

\address{D.~Sauzin: CNRS--UMI 3483 Fibonacci, Scuola Normale Superiore, 
56126 Pisa, Italy}
\email{david.sauzin@sns.it}
\begin{abstract} 
For  a piecewise expanding unimodal interval map $f$ with unique acim
$\mu$, a perturbation $X$, and an observable $\varphi$,
the susceptibility function is $\Psi_\varphi(z)=
\sum_{k=0}^\infty z^k  \int  X(x)   \varphi'( f^k)(x) (f^k)'(x) \, d\mu$. 
 Combining  previous results \cite{Ba, BS1}
(deduced from spectral properties
of Ruelle transfer operators) with recent work of Breuer--Simon
\cite{BrSi} (based on techniques from the spectral theory
of Jacobi matrices and a classical paper of Agmon \cite{Ag}), we show that  density of the postcritical
orbit (a generic condition) implies that  $\Psi_\varphi(z)$ has a strong natural boundary
on the unit circle. The Breuer--Simon method provides uncountably many candidates for the outer functions
of $\Psi_\varphi(z)$, associated to precritical orbits.
If the perturbation $X$ is horizontal,  a generic
condition (Birkhoff typicality of the postcritical orbit) implies
that the nontangential limit of $\Psi_\varphi(z)$ as $z\to 1$ 
exists and coincides with the  derivative of the acim with
respect to the map
(``linear response formula"). 
Applying the Wiener--Wintner theorem, we study
the singularity type of nontangential limits
of $\Psi_\varphi(z)$ as $z\to e^{i\omega}$ for real $\omega$.
An additional LIL typicality assumption on the postcritical orbit gives stronger results.

\end{abstract}

\thanks{ We thank I. Assani, H. Bruin, J. Buzzi,
J.-R. Chazottes,  P. Collet,   J.-P. Kahane, I. Melbourne, T. Persson,  
H. Queff\'elec,
D. Schnellmann, W.X. Shen, B. Simon,
and D. Smania,  
 for useful comments and references. 
 This work was started during a visit of VB to the Scuola Normale
Superiore in Pisa, and essentially completed during a visit of VB to
the  UMI Fibonacci of CNRS at Scuola Normale
Superiore in Pisa, the hospitality  of which is gratefully acknowledged.
The authors acknowledge the support of the Centro di Ricerca Matematica Ennio de
Giorgi.
The research leading to these results has received funding from the European
Comunity's Seventh Framework Program (FP7/2007--2013) under Grant Agreement
n.~236346. VB is a member of the ANR grant DynNonHyp, BLAN08-2\_313375.}
\maketitle

\section{Statement of results}

\subsection{Standing assumptions and notations}

Our standing assumptions on the dynamics $f$
are as follows: $I=[a,b]$ is a compact interval and $f:I\to I$ 
is a  piecewise expanding $C^3$ unimodal map. That is, $f$ is continuous on $I$ with $f(a)=f(b)=a$,
and there exists $c\in (a,b)$ (the {\em critical point}) so that
$f$ is $C^3$ with  $f'>1$ on $[a,c]$,
and $f$ is $C^3$ with $f'<-1$ on $[c,b]$.
We put $c_k=f^k(c)$ for $k\ge 0$.
The sequence $\{ c_k\}_{k \ge 1}$ is called
the {\em postcritical orbit.}  (The interval $[c_2,c_1]$ is forward-invariant.) We set
$$\lambda=\inf_{x\ne c} |f'(x)| >1 \, .
$$
We shall assume that the critical point is not periodic (this ensures that
$f$ is ``good" in the sense of \cite{BS1}). In fact,
except in Remark~\ref{finite}, we shall always assume 
that the postcritical orbit
$\{ c_k \}_{k \ge 1}$ is infinite. 
We let $\mu=\rho\, dx$ be the unique absolutely continuous invariant
probability measure of $f$.  
Recall that $(f, \mu)$ is always ergodic, and
\cite[Prop. 3.6]{Babook} that $(f, \mu)$ is mixing if
$f$ is topologically mixing on $[c_2, c_1]$. Except in Remark ~\ref{nonmix} we assume throughout that $f$
is topologically mixing on $[c_2, c_1]$.

Let 
$X\in C^2(f(I))$ satisfy $X(a)=0$. 
We say that $X$ is 
{\em horizontal} for $f$ if
\begin{align}\label{13}
\sum_{n=0}^{\infty} \frac{ X(c_{n+1})}{(f^n)'(c_1)} =0\, .
\end{align}
Consider a $C^2$ family $t\mapsto f_t$  of $C^3$ piecewise expanding interval
maps (defined for $t \in (-1,1)$, say), with $f_0=f$,
so that all $f_t$ have the same critical point $c$ and satisfy $f_t(a)=f_t(b)=a$,
and  so that $\partial_t f_t|_{t=0}= X \circ f$. (In other words,
$X$ represents a perturbation of the dynamics.)
We denote by $\mu_t$ the unique  absolutely continuous invariant
probability measure of $f_t$.

The main motivation for the present work is provided by the following question:
What is the smoothness of the map $t \mapsto \int \varphi\, d\mu_t$
(for suitable functions $\varphi:I \to \complex$)?  In connection with this
question, Ruelle \cite{Ru1, Ru2} proposed to study the {\em susceptibility
function,} i.e., the formal power series associated
to $f$, $X$ and $\varphi \in C^1(I)$ by
\begin{equation}
\label{7}\Psi_\varphi(z)=
\sum_{k=0}^\infty z^k  \int  X(x) \,(\ph'\circ f^k)(x)(f^k)'(x) \, \rho(x) dx  \, .
\end{equation}
(See the survey \cite{BaN}.)
We can assume without loss of generality that $\int \varphi\, d\mu=0$.
Integrating each term by parts and using that 
$\rho$ is of bounded variation, the definition 
of $\Psi_\varphi(z)$ is extended to 
$\varphi \in C^0(I)$ (see \eqref{formm} for an explicit formula).

For generic $X$, the following function 
encodes the singular behaviour
of $\Psi_\varphi$:
\begin{equation}	\label{eqdefsigph}
\sigma_\varphi(z)= \sum_{k=0}^\infty \varphi(c_{k+1}) z^k \, .
\end{equation}
Indeed, the following theorem is essentially proved in \cite{Ba, BS1} (see Appendix~\ref{gapp} for  details, including explicit
formulas for $\UU$, $\VV_\varphi$, and $\Psi^{hol}_\varphi$):

\begin{theorem}[Susceptibility function and linear response]\label{oldtheorem}
If $\varphi \in C^0(I)$ 
and $\int \varphi\, d\mu=0$, the susceptibility function~$\Psi_\varphi$ is
holomorphic in the open unit disc, and
\begin{align}\label{formula}
\Psi_\varphi(z) =
\Psi^{sing}_\varphi(z)+  \Psi^{hol}_\varphi(z)
= \UU(z) \sigma_\varphi (z) + \VV_\varphi(z) + \Psi^{hol}_\varphi(z)\, ,
\end{align}
where  the functions $\UU$ and~$\VV_\varphi$ are holomorphic in 
$|z| > \lambda^{-1}$, 
there exists $\kappa <1$ so that $\Psi^{hol}_\varphi$ is holomorphic in
$|z|<\kappa^{-1}$, and the function~$\UU$ depends only on $f$ and $X$. In addition,
$X$ is horizontal if and only if  $\UU(1)=0$. 

If $X$ is horizontal, then for any $C^2$ family $t\mapsto
f_t$ of $C^3$ piecewise expanding interval maps so that $f_0=f$ and $\partial_t
f_t|_{t=0}= X \circ f$, we have
\begin{equation}\label{mystery}
 \pa_t \Big( \int \ph \, d\mu_t \Big) |_{t=0}=\VV_\varphi(1) + \Psi^{hol}_\varphi(1)
 \, .
\end{equation}
\end{theorem}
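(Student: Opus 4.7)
My plan is to leverage the spectral theory of the Ruelle transfer operator $\LL$ on $BV([c_2,c_1])$. Under topological mixing on $[c_2,c_1]$, $\LL$ has~$1$ as a simple isolated eigenvalue with eigenvector $\rho$, the rest of its spectrum lying in a disc of radius some $\kappa<1$; this $\kappa$ will be the one in the theorem.

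Since $X\in C^2$ with $X(a)=0$ and $X\rho$ is BV, integration by parts in each term of \eqref{7} gives
\[
\int X\,(\varphi'\circ f^k)(f^k)'\,\rho\,dx = -\int(\varphi\circ f^k)\,d(X\rho).
\]
The Stieltjes measure $d(X\rho)$ decomposes as $h(x)\,dx + \sum_{n\ge 1}X(c_n)s_n\,\delta_{c_n}$, where $h\in BV$ and $s_n$ is the jump of $\rho$ at $c_n$. The classical description of $\rho$ as a sum of ``spikes'' propagated forward from~$c_1$ gives $s_n=s_1/(f^{n-1})'(c_1)$ for $n\ge 2$, with $s_1\ne 0$ depending only on~$f$. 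By duality $\int(\varphi\circ f^k)h\,dx=\int\varphi\,\LL^k h\,dx$: subtracting the $\rho$-eigenprojection (which contributes nothing since $\int\varphi\,d\mu=0$) and summing via the spectral gap yields $\Psi^{hol}_\varphi(z)=-\sum_k z^k\int\varphi\,\LL^k h\,dx$, holomorphic in $|z|<\kappa\ii$. For the atomic piece, using $f^k(c_n)=c_{n+k}$, substituting the decay of $s_n$, and reindexing $j=n+k-1$ separates the infinite tail from finitely many head terms:
\[
-\sum_k z^k\sum_n X(c_n)s_n\varphi(c_{n+k}) = \UU(z)\sigma_\varphi(z)+\VV_\varphi(z),
\]
with $\UU(z)=-s_1\sum_{n\ge 1}X(c_n)(f^{n-1})'(c_1)\ii z^{-(n-1)}$ and a companion $\VV_\varphi$, both holomorphic in $|z|>\lambda\ii$. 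By construction $\UU$ depends only on $f$ and $X$, and $\UU(1)=-s_1\sum_{m\ge 0}X(c_{m+1})/(f^m)'(c_1)$ vanishes iff \eqref{13} holds.

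For the linear-response identity, I would differentiate the invariance equation $\LL_t\rho_t=\rho_t$ at $t=0$. With $\dot\rho=\partial_t\rho_t|_{t=0}$ this gives $(\id-\LL)\dot\rho=-(X\rho)'$ distributionally; the right-hand side lies in the kernel of the $\rho$-eigenprojection (a computation for which horizontality is exactly what is needed), so $(\id-\LL)\ii$ is well defined on it by the spectral gap, and
\[
\partial_t\int\varphi\,d\mu_t\Big|_{t=0}=-\int\varphi\,(\id-\LL)\ii(X\rho)'\,dx.
\]
The same transfer-operator manipulations that produced \eqref{formula} identify this pairing with the Abel limit of $\VV_\varphi(z)+\Psi^{hol}_\varphi(z)$ as $z\to 1$; since both functions are holomorphic at $z=1$, this limit is just $\VV_\varphi(1)+\Psi^{hol}_\varphi(1)$.

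The main obstacle is precisely this last identification. A priori $\sigma_\varphi(1)=\sum_{k\ge 0}\varphi(c_{k+1})$ diverges, so $\Psi_\varphi(z)$ itself has no nontangential limit at $z=1$ and one cannot naively equate $\partial_t\int\varphi\,d\mu_t|_{t=0}$ with $\Psi_\varphi(1)$. The horizontality hypothesis $\UU(1)=0$ is exactly what prevents the outer contribution $\UU(z)\sigma_\varphi(z)$ from producing an obstruction in the limit; making the cancellation rigorous requires carefully tracking how the atomic measure $\sum_n X(c_n)s_n\delta_{c_n}$, when hit by the resolvent $\sum_k z^k\LL^k$, telescopes so that the divergent head is absorbed into $\VV_\varphi(z)$ while the coefficient of $\sigma_\varphi(z)$ is exactly $\UU(z)$.
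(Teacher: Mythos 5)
For the decomposition \eqref{formula} your route is essentially the paper's: integration by parts, the splitting $\rho=\rho_{sal}+\rho_{reg}$ with $s_n=s_1/(f^{n-1})'(c_1)$, the spectral gap of $\LL$ on $BV$ to handle $\Psi^{hol}_\varphi$ (with the eigenprojection killed by $\int\varphi\,d\mu=0$), and the completion of the finite inner sum over the spikes into a full series whose coefficient is $\UU(z)$, the tails being regrouped into $\VV_\varphi$. This is exactly \eqref{formm} from \cite{Ba} combined with the resummation of \cite[Prop.~4.6]{BS1}, as reproduced in Appendix~\ref{gapp}; your $\UU$ agrees (up to a sign convention) with \eqref{defU}, so the equivalence of horizontality with $\UU(1)=0$ comes out the same way. (Your phrase ``finitely many head terms'' is inaccurate --- $\VV_\varphi$ is an infinite sum of tails $\alpha(c_j,z)/(f^{j-1})'(c_1)$, one for each $j$ --- but the formula you write down is the right one, and $\VV_\varphi$ is indeed holomorphic in $|z|>\lambda^{-1}$.)

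The genuine gap is in the linear response identity \eqref{mystery}. Your argument --- differentiate $\LL_t\rho_t=\rho_t$ at $t=0$, observe that $-(X\rho)'$ is annihilated by the eigenprojection, invert $\id-\LL$ by the spectral gap --- is only formal, and as stated it would yield linear response without using horizontality in any essential way, whereas linear response genuinely fails for non-horizontal $X$ (this is the content of \cite{BS1}; see also \cite{BBS}). Concretely: (i) $\int (X\rho)'\,dx=0$ holds for every $X$, so the kernel-of-eigenprojection condition is automatic and is \emph{not} where horizontality enters; (ii) $(X\rho)'$ is a measure with atoms $s_nX(c_n)\delta_{c_n}$ on the postcritical orbit, hence lies outside any Banach space on which $\LL$ has a spectral gap, and the series $\sum_k\LL^k$ applied to this atomic part diverges in general (its contribution is precisely $\Psi^{sing}_\varphi$ at $z=1$); horizontality is exactly what makes this singular contribution resummable, via $\UU(1)=0$ and \eqref{mystery2}; (iii) $t\mapsto\rho_t$ is not differentiable in a norm in which the perturbation argument closes, so differentiating the fixed-point equation requires substantial justification. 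You flag the ``last identification'' as the main obstacle, but you misattribute the role of horizontality and assume the differentiability of $t\mapsto\mu_t$ that is itself the hard part. The paper does not attempt any of this here: \eqref{mystery} is imported wholesale from \cite[Thm~5.1, Prop.~4.3, Lemma~4.4]{BS1}, whose proof handles exactly these points and uses the standing hypothesis that the critical point is not periodic ($f$ ``good''), a hypothesis that never appears in your sketch.
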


Our aim in this article is to investigate the behaviour of the
functions~$\sig_\ph(z)$ and~$\Psi_\ph(z)$ near the unit circle and particularly near
the point~$z=1$. 

In view of~\eqref{eqdefsigph}, the distribution of the points $c_1,c_2,\ldots$  in~$I$ will obviously play a role.
The sequence $\{c_k\}_{k\ge 1}$ is generically  dense in the interval $[c_2,c_1]$.
In fact,
Schnellmann \cite[Thm 6.1, comments after Cor.\ 2]{Sc} showed that for generic%
\footnote{I.e., almost all maps in a transversal
family, we refer to \cite{Sc} for a precise statement.}
 piecewise expanding $C^2$ maps $f$, the critical point
$c$ is {\em Birkhoff typical,} that is, for all continuous functions $\varphi :I \to \complex$
we have
$$
\lim_{m \to \infty} \frac{1}{m}\sum_{k=0}^{m-1} \varphi(c_{k+1})= \int \varphi \, d\mu\, .
$$
 (If $c$ is Birkhoff typical, its orbit is dense in $[c_2,c_1]$, because our mixing
 assumption implies that  $\rho$ is bounded from below on its
 support $[c_2,c_1]$.)

Our main results are the following:

Theorem~\ref{natural} in Subsection ~\ref{1.2} gives a strong natural boundary for
$\Psi_\varphi$, and 
Theorem \ref{thmpoly} in Subsection ~\ref{1.3} guarantees ``uncountably
many renascent right limits," 
both are
proved in Section~\ref{22} under the assumption that the critical orbit is dense.  

Under the assumption that the critical orbit is Birkhoff typical, Theorem
\ref{abelianNTzero} and Corollary~\ref{corlinresp} in Subsection ~\ref{1.4}
give nonpolarity of nontangential limits of $\Psi_\varphi(z)$ at $z=e^{i\omega}$, and, assuming horizontality, the tangential limit at
$z=1$  (proofs in Section~\ref{33}), while Theorem ~\ref{derr} in  Subsection~\ref{1.6} gives
nontangential limits of derivatives of $\Psi_\varphi(z)$ at $z=1$  (proof in Section~\ref{55}). 

Then, Theorem~
\ref{abelianloglog} in Section~\ref{1.5} refines Theorem
\ref{abelianNTzero}, under a (possibly generic) ``iterated logarithm law" condition
on the postcritical orbit (proof in Section~\ref{33}).
Finally, Section ~\ref{44} is devoted to  observables $\varphi$ which are coboundaries, the
main result there is Proposition~\ref{coboundary} (assuming again only Birkhoff typicality of the periodic orbit).

\begin{remark}[Finite postcritical orbits]\label{finite}
The (highly non generic) case when the postcritical orbit is finite is much
simpler \cite[\S 5]{Ba}. Then the susceptibility function is holomorphic in the open
unit disc and meromorphic in a disc of radius $\kappa^{-1}>1$. Its
possible poles in that disc are roots of unity, and $z=1$ is
not a pole if $X$ is horizontal.
The function~$\sig_\ph$ itself is rational:
$\sig_\ph(z) = P(z) + \frac{Q(z)}{1-z^p}$,
where $p\ge 1$ is the primitive period of the postcritical periodic point
$f^p(c_m)=c_m$, and
$$P(z) = \sum_{k=1}^m \ph(c_k) z^{k-1}\, , \quad 
Q(z) = \sum_{k=m}^{m+p-1} \ph(c_k) z^{k-1}\, .
$$
The  residue of $\sigma_\varphi$ at $\omega$ with $e^{ip \omega}=1$ is equal to
$\frac{1}{p}\sum_{k=m}^{m+p-1} e^{i\omega(k-1)}  \varphi(c_k)$.
(Note that if  there is a continuous 
$\psi$ so that $\varphi=e^{i\omega}\psi -\psi \circ f$,
 then this residue vanishes.)
\end{remark}

\begin{remark}[Non-mixing maps]\label{nonmix}
If the postcritical orbit is dense in $[c_2, c_1]$ and $f$ is not topologically mixing 
on $[c_2,c_1]$, then 
there exist two intervals $J_1$ and $J_2$ with $J_1\cap J_2=\{y_0\}$,
$f(y_0)=y_0$, and $J_1\cup J_2=[c_2,c_1]$, with $f(J_1)=J_2$, $f(J_2)=J_1$, and
$f^2|_{J_i}$ topologically mixing for $i=1,2$ (see e.g.\ \cite[\S VI.5, Thm ~46 and remark thereafter]{BC}, 
noting that the dense postcritical orbit assumption ensures that $f$ is transitive on
$[c_2, c_1]$). The interested reader can  exploit the present remark to study  non-mixing maps $f$
for which the postcritical
orbit is dense (angles $\omega$ with $e^{i\omega}=-1$ must be treated separately).
\end{remark}

\subsection{Strong natural boundary}\label{1.2}

Following \cite{BrSi}, a function $g(z)$ holomorphic in the disc $|z|<1$ is
said to have a {\em strong natural boundary} on the unit circle if,
for every nonempty interval $(\om_1,\om_2)$,

\begin{equation}	\label{eqdefstrgnatb}
\sup_{0<r<1} \int_{\om_1}^{\om_2} |g(r e^{i\omega})|\, d\omega =\infty\, .
\end{equation}

If $g$ has a strong natural boundary
then $g$ has an
$L^\infty$ natural boundary, that is, $|g|$ is unbounded in every sector
$\{r e^{i\omega}\mid r \in (0,1),\,
\om\in (\om_1,\om_2)\}$.

Using recent deep results of Breuer and Simon \cite[Thm 3.1]{BrSi} based on previous work
of Agmon \cite{Ag}, we prove in Section~\ref{22}:

\begin{maintheorem}[Strong natural boundary]\label{natural}
Assume that  the postcritical orbit is dense
in $[c_2, c_1]$.
Then, for any continuous $\varphi$ which is not constant on $[c_2,c_1]$ and so
that $\int \varphi\, d\mu=0$, the unit circle is a strong natural boundary
for~$\sig_\ph$ and for the susceptibility function $\Psi_\varphi$.
\end{maintheorem}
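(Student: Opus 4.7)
The plan is to reduce the problem to proving strong natural boundary for $\sigma_\varphi$ alone and then apply Breuer--Simon's Theorem~3.1 of \cite{BrSi}, feeding it a supply of right limits produced from the density hypothesis.

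For the reduction, I would use the decomposition $\Psi_\varphi = \mathcal{U}\,\sigma_\varphi+\mathcal{V}_\varphi+\Psi^{hol}_\varphi$ from Theorem~\ref{oldtheorem}. The summands $\mathcal{V}_\varphi$ and $\Psi^{hol}_\varphi$ are holomorphic across the unit circle (the former in $\{|z|>\lambda^{-1}\}$, the latter in $\{|z|<\kappa^{-1}\}$), hence uniformly bounded on an annulus around $S^1$, so they contribute only an $O(1)$ term to any integral $\int_{\omega_1}^{\omega_2}|\Psi_\varphi(re^{i\omega})|\,d\omega$. Since $\mathcal{U}$ is also holomorphic in a neighborhood of $S^1$ and, by the explicit formula recalled in the appendix, not identically zero, its zeros on the unit circle are isolated; on a suitable sub-arc $J\subset(\omega_1,\omega_2)$ one has $|\mathcal{U}(re^{i\omega})|\ge\delta>0$ uniformly for $r$ close to $1$, so a strong natural boundary for $\sigma_\varphi$ on $J$ transfers to $\Psi_\varphi$ on $(\omega_1,\omega_2)$.

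The core step is a strong natural boundary for $\sigma_\varphi$, which I would obtain from Breuer--Simon's criterion. That criterion yields \eqref{eqdefstrgnatb} once one constructs enough \emph{right limits} of the coefficient sequence $a_n=\varphi(c_{n+1})$ --- bi-infinite sequences $(b_m)_{m\in\integer}$ obtained as $b_m=\lim_k a_{n_k+m}$ along some $n_k\to\infty$ --- whose associated one-sided generating series $\sum_{m\ge 0} b_m z^m$ do not all coincide. Density of $\{c_k\}$ in $[c_2,c_1]$ is tailor-made for this: given any $y\in[c_2,c_1]$ and any backward orbit $(y_{-j})_{j\ge 0}$ through $y$ (a precritical orbit in the sense of the abstract when $y=c$), one first uses density to select $n_k\to\infty$ with $c_{n_k}\to y$, then continuity of $f^m$ for $m\ge 0$ gives $c_{n_k+m}\to f^m(y)$, and a diagonal extraction relying on compactness of $I$ arranges $c_{n_k+m}\to y_m$ for every $m<0$ along a subsequence. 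Continuity of $\varphi$ then yields the right limit $b_m=\varphi(y_{m+1})$, and since $y$ together with the backward orbit through $y$ ranges over an uncountable family while $\varphi$ is non-constant on $[c_2,c_1]$, the resulting generating series genuinely differ.

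The main obstacle I foresee is translating the precise hypothesis of Breuer--Simon's Theorem~3.1 from their shift-theoretic, Jacobi-matrix formulation (itself built on Agmon~\cite{Ag}) into our continuous-state unimodal setting, and verifying that our supply of right limits meets their non-degeneracy criterion rather than merely producing distinct limits. The reduction and the extraction of right limits themselves should be comparatively routine once the density of the postcritical orbit is in hand.
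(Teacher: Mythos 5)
Your reduction from $\Psi_\varphi$ to $\sigma_\varphi$ via the decomposition of Theorem~\ref{oldtheorem} is fine, and in fact more explicit than the paper, which simply cites that theorem (one does need $\UU\not\equiv 0$, which by \eqref{defU} holds as soon as $X\not\equiv 0$ when the postcritical orbit is dense). The genuine gap is in the core step. First, the criterion you attribute to Breuer--Simon is not their Theorem~3.1: having a supply of right limits ``whose associated one-sided generating series do not all coincide'' is \emph{not} sufficient for a strong natural boundary. For instance $a_k=e^{ik\omega_0}$ with $\omega_0/2\pi$ irrational has uncountably many right limits $b_n=\lambda e^{in\omega_0}$, $|\lambda|=1$, with pairwise distinct one-sided series, yet $\sum_k a_kz^k=(1-e^{i\omega_0}z)^{-1}$ is rational. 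What \cite[Thm 3.1]{BrSi} requires is two \emph{distinct} right limits which \emph{coincide on a half-line} ($n\ge N$ or $n\le N$). Second, your construction does not deliver such a pair under the stated hypothesis. If you vary the base point $y$, the forward parts differ and the half-line agreement is lost; if you fix $y$ and vary only the backward orbit, you do get agreement for $n\ge 0$, but then distinctness of the two sequences $\{\varphi(y_m)\}$ does \emph{not} follow from mere non-constancy of $\varphi$: if $\varphi$ is $f$-symmetric, say $\varphi=\psi\circ f$, then at the first index where two backward orbits through $y$ separate the $\varphi$-values still agree, so a further argument is required (this is exactly why Theorem~\ref{thmpoly}\eqref{itemRRLph} assumes $\varphi$ not $f$-symmetric, a strictly stronger hypothesis than non-constancy). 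You yourself flag ``verifying that our supply of right limits meets their non-degeneracy criterion rather than merely producing distinct limits'' as the main obstacle; that obstacle is precisely where the proof has content, and it is left unresolved.

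The paper closes this gap differently, and in a way that works for every non-constant continuous $\varphi$: pick $y,\tilde y\in[c_2,c_1]$ with $\varphi(y)\neq\varphi(\tilde y)$, and use topological mixing through the covering property (Lemma~\ref{lempastepsdense} applied to $x_0=c$) to find a single $\ell$ and preimages $x\in f^{-\ell}(c)$ near $y$, $\tilde x\in f^{-\ell}(c)$ near $\tilde y$ at which $\varphi$ still takes different values. Density of the postcritical orbit plus the diagonal extraction (Lemma~\ref{diagonal}) gives right limits of $\{c_{k+1}\}$ with $x_0=x$ and $\tilde x_0=\tilde x$; since right limits are complete orbits and $f^\ell(x_0)=f^\ell(\tilde x_0)=c$, the two orbits merge, so $x_n=\tilde x_n$ for $n\ge\ell$ while $\varphi(x_0)\neq\varphi(\tilde x_0)$. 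Thus the discrepancy sits at $n=0$ and the agreement holds for $n\ge\ell$, which is exactly the hypothesis of \cite[Thm 3.1]{BrSi}. If you wanted to salvage your own route (discrepancy placed in the backward part above a fixed forward orbit), you would additionally have to prove something like: a continuous function constant on every fiber of every $f^k$ is constant on $[c_2,c_1]$ (using that $f^{-k}(x_0)$ becomes $\eps$-dense), and then locate a finite backward depth at which $\varphi$ separates two branches --- none of which appears in your proposal.
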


For a sequence $\{a_k\}_{k=0}^\infty$ in a topological space~$E$, 
a {\em right limit} \cite{BrSi} is any two-sided sequence $\{ b_n\}_{n=-\infty}^\infty$ of~$E$
for which there exists an increasing sequence of positive integers 
$\{k_j\}_{j=0}^\infty$ such that
$\lim_{j\to\infty} a_{n+k_j} = b_n$ for every $n\in\integer$.
Theorem~3.1 in \cite{BrSi}  reads as follows:
\begin{quote}
Suppose that a bounded sequence of complex numbers $\{a_k\}_{k=0}^\infty$ has two distinct right limits
$\{ b_n\}_{n=-\infty}^\infty$ and $\{\tilde b_n\}_{n=-\infty}^\infty$, and that 
there exists $N\in\integer$ such that either $b_n = \tilde b_n$ for all $n\ge N$,
or $b_n = \tilde b_n$ for all $n\le N$, then 
the unit circle is a strong natural boundary for the power series $\sum_{k\ge 0} a_k z^k$.
\end{quote}

The proof of Theorem~\ref{natural} will consist in exhibiting, in the
case of the sequence $\ph(c_{k+1})$, distinct right limits which coincide for
$n\ge 0$, entailing a strong natural boundary for~$\sig_\ph$ and thus
for~$\Psi_\ph$.

\subsection{Renascent right limits}\label{1.3}

In fact, we shall see in Theorem~\ref{thmpoly} that the set of right limits of
the sequence $\{\varphi(c_{k+1})\}_{k \ge 0}$ is multifarious well beyond the requirement of
\cite[Thm 3.1]{BrSi}. To state this result, we introduce some
terminology:

A right limit $\{b_n\}_{n =-\infty}^\infty$ of a bounded sequence of complex
numbers $\{ a_k \}_{k=0}^\infty$ is called {\em renascent} if $b_n=a_n$ for all $n\ge 0$,
and {\em weakly renascent} if there is a finite set of integers $F$ so
that $b_n=a_n$ for all $n \in \Natural\setminus F$. 
If $\{b_n\}_{n=-\infty}^\infty$ is renascent then the function $g_{b_-}(z) = - \sum_{n=-1}^{-\infty} b_n
z^n$, which is holomorphic in $|z|>1$ and vanishes at~$\infty$, is called
an {\em rrl-continuation} of the function
$g(z) = \sum_{k=0}^\infty a_k z^k$

Following \cite{BrSi}, we say that a bounded sequence $\{ b_n\}_{n \in
\integer}$ is {\em reflectionless} on an arc $J = \{\, e^{i\om} \mid \om \in
(\om_1,\om_2) \,\}$ of the unit circle if the function $g_+(z) =
\sum_{n=0}^\infty b_n z^n$ admits an analytic continuation in a neighbourhood
of~$J$ and the value of this continuation at any~$z$ with $|z|>1$ is $g_-(z) =
-\sum_{n=-1}^{-\infty} b_n z^n$.

Let $g(z)=\sum_{k=0}^\infty a_k z^k$ with $\sup|a_k|<\infty$
and $J = \{\, e^{i\om} \mid \om \in (\om_1,\om_2) \,\}$.
According to Theorem~1.4 of \cite{BrSi},
if \eqref{eqdefstrgnatb} is violated
(e.g.\ if $g$ admits an analytic extension through~$J$),
then  all right limits  $\{ b_n\}_{n=-\infty}^\infty$ of $\{a_k\}_{k=0}^\infty$ are reflectionless on~$J$.
In particular, if $\{a_k\}_{k=0}^\infty$ admits a renascent right limit 
$\{b_n\}_{n=-\infty}^\infty$,
then $g_+=g$ by definition. Thus, if there exists an arc~$J$ in the
neighbourhood of which~$g$ admits an analytic continuation, this continuation
must coincide with each \rrlc\ $g_-=g_{b_-}$ outside the closed unit disc.
In such a case, all renascent right limits coincide and all the analytic
continuations of~$g$ through arcs of the unit circle match.
(Mutatis mutandis, weakly renascent right limits enjoy similar properties.)
If on the contrary there is no analytic continuation for~$g$ across any arc
of the unit circle, then the unit circle is a strong natural boundary (because
the renascent right limit is not reflectionless on any arc).
However, we still may think of the \rrlc s $g_{b_-}$ as being somewhat
``connected'' to~$g$, as in the theories of generalised analytic continuation \cite{RS}
or of monogenic continuation \cite{Bor}, \cite{MS}.%
\footnote{In a nutshell, Borel monogenic functions are a generalisation of analytic functions of one complex variable, which allows functions to be defined on closed sets which may even have empty interior. They share most of the properties of analytic functions---especially Cauchy's integral formula---including, in some cases relevant for our scopes, being a quasianalytic space. }
We refer to Appendix~\ref{exxx} for examples.

We require more terminology:
A power series $\sum_{k =0}^\infty a_k z^k$ 
with $\sup|a_k|< \infty$ 
 is called {\em polygenous} if the set  of  renascent
right limits of the sequence $\{a_k\}_{k=0}^\infty$  contains at least two elements.

We do not know any polygenous power series admitting a Borel monogenic
extension beyond the unit circle, and we are tempted to guess that  such examples do not
exist.
An even stronger conjecture would be that, whenever a power series has a monogenic extension
beyond the unit circle and an \rrlc, they necessarily coincide.

By \cite[Thm 3.1]{BrSi}, the unit
circle is a strong natural boundary for any polygenous series. The following
result, which is proved in Section~\ref{22}, is thus a reinforcement of
Theorem~\ref{natural}:
We say that a function $\varphi$ is
{\em $f$-symmetric on $[c_2,c_1]$} if $\varphi(x)= \varphi(y)$ for any
$x$, $y$ in $[c_2,c_1]$ so that $f(x)=f(y)\in [c_2,c_1]$.

\begin{maintheorem}[Uncountably many renascent right limits for the postcritical orbit]\label{thmpoly}
Assume that the postcritical orbit is dense in $[c_2,c_1]$.
\begin{enumerate}[(i)]
\item	\label{itemRLc}
The right limits of the sequence $\{c_{k+1}\}_{k\ge 0}$ are exactly the
complete orbits of~$f$ contained in $[c_2,c_1]$, i.e., the two-sided sequences
$\{x_n\}_{n=-\infty}^\infty$ of $[c_2,c_1]$ such that
$x_{n+1} = f(x_n)$ for all $n\in\integer$.
\item \label{itemRRLph}
For any continuous $\varphi$ which is not $f$-symmetric on $[c_2,c_1]$,
the set  obtained by identifying renascent right limits of
$\{ \varphi(c_{k+1})\}_{k \ge 0}$ 
which differ only on a finite set is uncountable.
\end{enumerate}
\end{maintheorem}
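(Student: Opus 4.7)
The two parts divide naturally: (i) is a soft dynamical statement, while (ii) will be a construction that leverages (i) together with topological mixing.

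For (i), the forward inclusion is direct: if $\{b_n\}_{n\in\integer}$ is a right limit witnessed by $k_j\to\infty$, then $b_n\in[c_2,c_1]$ (since $c_{k+1}\in[c_2,c_1]$ for every $k\ge 1$) and continuity of $f$ gives $f(b_n)=\lim_{j\to\infty}f(c_{n+k_j+1})=\lim_{j\to\infty}c_{n+k_j+2}=b_{n+1}$. For the reverse inclusion, given a complete orbit $\{x_n\}\subset[c_2,c_1]$, I would build the witnessing subsequence by a diagonal argument. Set $\La:=\sup\{|f'(x)|:x\in[c_2,c_1],\,x\ne c\}$; a two-piece triangle inequality shows that $f$ is globally $\La$-Lipschitz on $[c_2,c_1]$ despite the corner at $c$. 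For each $J\ge 1$, density of $\{c_{k+1}\}_{k\ge 1}$ in $[c_2,c_1]$ supplies some $k_J>k_{J-1}$ with $|c_{k_J-J+1}-x_{-J}|<J^{-1}\La^{-2J}$, and forward iteration then yields $|c_{n+k_J+1}-x_n|<J^{-1}$ for all $|n|\le J$.

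For (ii), I combine (i) with continuity of $\ph$: every complete orbit $\{x_n\}\subset[c_2,c_1]$ with $x_0=c_1$ produces a renascent right limit $\{\ph(x_n)\}$ of $\{\ph(c_{k+1})\}$, since $x_n=c_{n+1}$ for $n\ge 0$ (renascent property automatic) and (i) says $\{x_n\}$ is a right limit of $\{c_{k+1}\}$. The free parameter is the backward orbit $(x_{-n})_{n\ge 1}$ in $[c_2,c_1]$. Since $\ph$ is continuous and not $f$-symmetric, I fix $u\in[c_2,c)$ and $v\in(c,c_1]$ with $f(u)=f(v)$ and $\eta:=|\ph(u)-\ph(v)|>0$, together with disjoint open sets $U\subset[c_2,c)$ around $u$ and $V\subset(c,c_1]$ around $v$ on which the $\ph$-values stay separated by at least $\eta/2$.

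The key dynamical input will be the ``big image'' property for piecewise expanding topologically mixing interval maps: there exists $M\in\Natural$ with $f^M(U)=f^M(V)=[c_2,c_1]$ (images of open intervals grow exponentially until they straddle $c$, after which further iteration covers $[c_2,c_1]$). Granting this, for every $\eps\in\{0,1\}^{\Natural}$ I would define a backward orbit $B(\eps)=\{x_n(\eps)\}_{n\le 0}$ of $c_1$ inductively: set $x_0(\eps)=c_1$, and given $x_{-(k-1)M}(\eps)\in[c_2,c_1]$, use $f^M(U)=f^M(V)=[c_2,c_1]$ to select $x_{-kM}(\eps)\in U$ if $\eps_k=0$, or in $V$ if $\eps_k=1$, with $f^M(x_{-kM}(\eps))=x_{-(k-1)M}(\eps)$; then fill in the intermediate positions by forward iteration. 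If $\eps$ and $\eps'$ disagree at infinitely many indices, the values $\ph(x_{-kM}(\eps))$ and $\ph(x_{-kM}(\eps'))$ are separated by at least $\eta/2$ at infinitely many $k$, so the two renascent right limits differ on an infinite set. Since $\{0,1\}^{\Natural}$ modulo eventual equality is uncountable, the image in the set of renascent right limits modulo finite differences is uncountable too. The technical crux, and main obstacle, is the big image property: once it is secured the inductive construction and the counting are routine, and it is what translates the Cantor structure of $\{0,1\}^{\Natural}$ into uncountably many distinct renascent right limits of $\{\ph(c_{k+1})\}$.
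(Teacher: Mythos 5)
Your proposal is correct in substance but follows a genuinely different route from the paper's. For part (i) the paper isolates a purely topological statement (Lemma~\ref{lemvardiag}): for any continuous self-map of a metric space with a dense orbit, the right limits of that orbit are exactly the complete orbits; the witnessing times are produced softly from nested open sets, using only continuity. You instead use the global Lipschitz constant $\Lambda=\sup_{x\neq c}|f'(x)|$ to preselect an orbit point $J^{-1}\Lambda^{-2J}$-close to $x_{-J}$ and push it forward $2J$ times; this is more quantitative but less general, and it is correct (density of the orbit in the nondegenerate interval $[c_2,c_1]$ does supply infinitely many admissible indices, so $k_J$ can be taken increasing and $\ge J$). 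For part (ii) the mechanisms differ more. The paper fixes an asymmetric pair $x\ne y$ with $f(x)=f(y)=v$ and applies Lemma~\ref{lempastepsdense} iteratively to produce uncountably many precritical orbits passing $\delta$-close to $v$ infinitely often, the asymmetry of the two preimage branches forcing distinct $\ph$-values at each passage. You run a Cantor scheme directly on backward orbits of $c_1$, routing the orbit through one of two fixed open sets $U,V$ with $\eta/2$-separated $\ph$-values every $M$ steps, so that $\eps,\eps'\in\{0,1\}^\Natural$ differing infinitely often give renascent right limits differing at infinitely many coordinates; the counting modulo finite differences is then indeed routine. Note that your scheme never uses the relation $f(u)=f(v)$ --- only the separation of $\ph$ on $U$ and on $V$ --- so it actually yields the conclusion for every $\ph$ nonconstant on $[c_2,c_1]$, slightly more than the theorem asserts, whereas the asymmetric pair is essential to the paper's branching argument.

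The one point you must shore up is the ``big image'' input, which you rightly call the crux: the parenthetical justification (intervals grow until they straddle $c$, then cover) is not a proof --- after straddling $c$ the image is cut again, and covering genuinely requires topological mixing (it fails for transitive non-mixing maps). The correct justification is the Hofbauer-type covering property already used in the proof of Lemma~\ref{lempastepsdense} (see \cite{Bu}, \cite{Li}, \cite{vBC}): after shrinking $U,V$ to open subintervals of $(c_2,c_1)$ (which preserves the separation of $\ph$-values), each contains a full monotonicity interval of $f^n$ for $n$ large (their lengths are at most $(b-a)\lambda^{-n}$), and $f^{\ell(n)}$ of such an interval covers $[c_2,c_1]$ up to finitely many points, hence, being an interval, contains $(c_2,c_1)$; the same $n$, and hence the same $\ell(n)$, works for both $U$ and $V$. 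This gives $f^{N}(U),f^{N}(V)\supseteq(c_2,c_1)$ rather than equality with $[c_2,c_1]$, so you must check that $c_1$ and the previously selected points can be pulled back: since $c\in(c_2,c_1)$ and $f((c_2,c_1))\supseteq(c_2,c_1]$, taking $M=N+1$ yields $f^{M}(U),f^{M}(V)\supseteq(c_2,c_1]$, which contains $c_1$ and every point $x_{-(k-1)M}(\eps)$ your induction needs. With this patch the construction and the final count go through as you describe.
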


We say that $\{ y_n\}_{n \le -1}$ is a {\em precritical orbit} if 
$y_n\in[c_2,c_1]$ with $f(y_{n-1})=y_{n}$ for all $n\le -1$,
and ${y_{-1}=c}$. 
Slightly abusing notation, we write $c_-=\{ c_{n+1}\}_{n \le -1}$ for a precritical
orbit. 
Theorem~\ref{thmpoly}\eqref{itemRLc} implies that
the renascent right limits of the sequence
$\{c_{k+1}\}_{k\ge0}$ are the complete orbits of~$c_1$ contained in
$[c_2,c_1]$, i.e., the two-sided sequences obtained by glueing any
precritical orbit with $\{c_{k+1}\}_{k=0}^\infty$.
The argument for  Theorem~ \ref{thmpoly}\eqref{itemRRLph} uses the fact (proved there) 
that there are uncountably many precritical orbits.

\begin{remark}[Renascent right limits and formal resolvents in $|z|>1$]\label{formalres}
By Theorem~\ref{thmpoly},  the  rrl-con\-ti\-nua\-tions  of $\sig_\ph$ are of the form
\begin{equation}\label{nosurprise}
\sigma_{\varphi,c_-}(z)=-\sum_{n \le -1} \varphi(c_{n+1}) z^n \, ,
\end{equation}
where $c_-=\{c_{n+1}\}_{n\le-1}$ is any precritical orbit.
Clearly, for $|z|<1$
\begin{equation}\label{Koop}
\sigma_\varphi(z)=\sum_{n =0}^{\infty} [(zU)^{n} \varphi](c_1) =[ (1-z U)^{-1} \varphi ] (c_1)\, ,
\end{equation}
where $U(\varphi) = \varphi \circ f$ is the Koopman operator acting
(e.g.) on $C^0$, which has spectral  radius equal to $1$, but is not
invertible. It is therefore hardly surprising that the \rrlc s,
which we expect to be candidates for the ``outer function" of $\sigma_\varphi$
outside of the unit disc, are nothing else than
\begin{equation}\label{nosurprise'}
\sigma_{\varphi,c_-}(z)=-\sum_{n =1}^{\infty} [(zU)^{-n} \varphi](c_1) \, ,
\end{equation}
where the operator sequence $U^{-n}$ is any sequence satisfying
$U^{-n} U= U^{-n+1}$, so that (\ref{nosurprise'}) can formally be written as
$$
-[(1- (zU)^{-1})^{-1} (zU)^{-1} \varphi](c_1) \, .
$$
(This remark should be put in parallel with the manipulations in \cite[\S 17]{Ru3}.)
Note  that \eqref{cobtrick} in Proposition~\ref{coboundary} below
is  not surprising either in view of the present discussion, since
$(1-e^{i\omega}U)\psi (c)=\varphi(c)$ for the function $\varphi$
considered there, although it is not clear how to exploit (\ref{Koop}) to prove ~\eqref{cobtrick}
without using Wiener--Wintner.
\end{remark}

\subsection{Nontangential limits --- The outer functions $\sigma_{\varphi,c_-}$ and $\Psiout$}\label{1.4}

We move on to results on the convergence of $\Psi_\varphi(z)$ when $z$ tends
non-tangentially in the open unit disc (i.e., without leaving a fixed open
sector based at $e^{i\omega}$ and contained inside the open unit disc) to some
$e^{i\omega}$ with $\omega\in \real$, denoted $z\NT e^{i\omega}$.
(Clearly,
if $\lim_{z\NT e^{i\omega}} \psi(z)$ exists then the Abelian limit
$\lim_{r \to 1} \psi(r e^{i\omega})$ exists.)
The following theorem is proved in Section~\ref{33} using
the Wiener--Wintner results from Appendix~\ref{BWW}:

\begin{maintheorem}[Nonpolar nontangential limits] \label{abelianNTzero}
Assume that the critical point  is
Birkhoff typical.
Let $\varphi \in C^0(I)$.
For any $\omega \in \real$ with $e^{i\omega} \ne 1$, we have
\begin{equation}\label{notan}
\lim_{z\NT e^{i\omega}} (z-e^{i\omega}) \sigma_\varphi(z)=0\, ,
\mbox{ and thus }
\lim_{z\NT e^{i\omega}} (z-e^{i\omega}) \Psi_\varphi(z)= 0 \, .
\end{equation}
If $\int \varphi\, d\mu=0$ then the above also holds for $e^{i\omega}=1$.
\end{maintheorem}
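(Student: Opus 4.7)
The plan is to use the decomposition \eqref{formula} from Theorem~\ref{oldtheorem} to reduce the statement for $\Psi_\varphi$ to one for $\sigma_\varphi$. Since $\UU$ and $\VV_\varphi$ are holomorphic in $|z|>\lambda^{-1}$ and $\Psi^{hol}_\varphi$ is holomorphic in $|z|<\kappa^{-1}$, with $\lambda^{-1}<1<\kappa^{-1}$, all three are bounded in a neighbourhood of $e^{i\omega}$. Writing
\[
(z-e^{i\omega})\Psi_\varphi(z) = \UU(z)\,(z-e^{i\omega})\sigma_\varphi(z) + (z-e^{i\omega})\bigl[\VV_\varphi(z)+\Psi^{hol}_\varphi(z)\bigr],
\]
one sees that the full statement follows from $(z-e^{i\omega})\sigma_\varphi(z)\to 0$ as $z\NT e^{i\omega}$, so the task reduces to this last claim.

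Next I would rotate coordinates via $\zeta:=e^{-i\omega}z$, which maps nontangential sectors at $e^{i\omega}$ bijectively onto nontangential sectors at~$1$. Setting $a_k:=e^{ik\omega}\varphi(c_{k+1})$ and $T_m:=\sum_{k=0}^{m-1}a_k$ (with $T_0=0$), summation by parts yields $\tilde\sigma(\zeta):=\sum_{k\ge 0}a_k\zeta^k = (1-\zeta)\sum_{m\ge 1}T_m\zeta^{m-1}$, hence
\[
(\zeta-1)\tilde\sigma(\zeta) = -(1-\zeta)^2\sum_{m\ge 1}T_m\zeta^{m-1}.
\]
The key input, to be supplied by the Wiener--Wintner machinery of Appendix~\ref{BWW}, is that $T_m=o(m)$ at the Birkhoff typical point~$c_1$ for every $\omega\in\real$ with $e^{i\omega}\ne 1$, together with the case $\omega=0$ under $\int\varphi\,d\mu=0$ (the latter being immediate from Birkhoff typicality applied to $\varphi$). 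Granting this, fix $\varepsilon>0$ and choose $M$ with $|T_m|\le\varepsilon m$ for $m\ge M$: the finite prefix contributes $O(|1-\zeta|^2)$, while the tail is bounded by
\[
\varepsilon\,|1-\zeta|^2\sum_{m\ge 1}m|\zeta|^{m-1} \;=\; \frac{\varepsilon\,|1-\zeta|^2}{(1-|\zeta|)^2}.
\]
Nontangential approach provides a constant $C$ with $|1-\zeta|\le C(1-|\zeta|)$ throughout the sector, so the tail is $\le C^2\varepsilon$; letting $\zeta\NT 1$ first and then $\varepsilon\to 0$ completes the argument.

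The main obstacle is not this soft Abelian-type estimate but the input from Appendix~\ref{BWW}. Birkhoff typicality as stated controls only the untwisted Cesàro averages $\tfrac1m\sum_{k=0}^{m-1}\varphi(c_{k+1})$, so one must upgrade it to $\tfrac1m T_m\to 0$ for all $\omega\in\real$ with $e^{i\omega}\ne 1$, at the specific (and dynamically distinguished) point $c_1$. The weak mixing of $(f,\mu)$ is the ergodic-theoretic ingredient that makes this plausible via the skew product $(x,\theta)\mapsto(f(x),\theta+\omega)$ on $I\times\mathbb T$, but carrying the upgrade out at the deterministic orbit of~$c$ rather than at $\mu\otimes d\theta$-almost every point is what requires genuine work in the appendix.
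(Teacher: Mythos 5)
Your proposal is correct and follows essentially the same route as the paper's own proof: Abel summation of the rotated series, the bound $S_k(e^{i\omega})=o(k)$ supplied by Lemma~\ref{WW} (Birkhoff typicality plus mixing implies Wiener--Wintner typicality), the sector comparability $|1-\zeta|\le C(1-|\zeta|)$, and the decomposition of Theorem~\ref{oldtheorem} to transfer the conclusion from $\sigma_\varphi$ to $\Psi_\varphi$. The only difference is presentational (your rotation $\zeta=e^{-i\omega}z$ versus the paper writing $\sigma_\varphi(e^{i\omega}z)$ directly), and your deferral of the twisted-average input to Appendix~\ref{BWW} matches exactly how the paper's proof invokes it.
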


In the horizontal case, we get a linear response interpretation
of the nontangential limit to $z=1$ by applying the results of \cite{BS1} recalled in Theorem~\ref{oldtheorem}:

\begin{corollary}[Linear response and nontangential limit at~$1$]	\label{corlinresp}
Assume that the critical point  is
Birkhoff typical, $X$ is horizontal, and
let
$\varphi \in C^0(I)$ with $\int \varphi\, d\mu=0$.
Then 
\begin{equation}	\label{eqabelPsiph}
\lim_{z\NT 1} \Psi_\varphi(z)= \VV_\varphi(1) + \Psi^{hol}_\varphi(1)\, .
\end{equation}
In addition, for any $C^2$ family $t\mapsto f_t$ of $C^3$ piecewise expanding
interval maps so that $f_0=f$ and $\partial_t f_t|_{t=0}= X \circ f$,  we have 
\begin{equation}\label{wasknown}
\lim_{z\NT 1} \Psi_\varphi(z) =\pa_t \Big( \int \ph \, d\mu_t \Big) |_{t=0} \, .
\end{equation}
\end{corollary}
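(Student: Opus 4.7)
The plan is to combine the decomposition of $\Psi_\varphi$ given by Theorem~\ref{oldtheorem} with the nontangential vanishing of $(z-1)\sigma_\varphi(z)$ furnished by Theorem~\ref{abelianNTzero}. The horizontality hypothesis is precisely what compensates the boundary singularity of~$\sigma_\varphi$.

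First I would recall from Theorem~\ref{oldtheorem} that
$$\Psi_\varphi(z) = \UU(z)\,\sigma_\varphi(z) + \VV_\varphi(z) + \Psi^{hol}_\varphi(z),$$
where $\UU$ and $\VV_\varphi$ are holomorphic on $\{|z|>\lambda^{-1}\}$ and $\Psi^{hol}_\varphi$ is holomorphic on $\{|z|<\kappa^{-1}\}$ with $\kappa<1$. In particular, all three functions $\UU$, $\VV_\varphi$ and $\Psi^{hol}_\varphi$ are holomorphic in some open neighbourhood of $z=1$, so $\VV_\varphi(z)+\Psi^{hol}_\varphi(z)\to \VV_\varphi(1)+\Psi^{hol}_\varphi(1)$ as $z\to 1$ (nontangentially or otherwise).

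Next, the horizontality of~$X$ gives $\UU(1)=0$ by the last assertion of Theorem~\ref{oldtheorem}. Since $\UU$ is holomorphic near~$z=1$, I can factor $\UU(z)=(z-1)\widetilde{\UU}(z)$ with $\widetilde{\UU}$ holomorphic in a neighbourhood of~$1$, hence bounded there. Writing
$$\UU(z)\sigma_\varphi(z) = \widetilde{\UU}(z)\cdot(z-1)\sigma_\varphi(z),$$
the hypothesis $\int \varphi\,d\mu=0$ together with Birkhoff typicality of~$c$ allows me to apply Theorem~\ref{abelianNTzero} at $\omega=0$, yielding $(z-1)\sigma_\varphi(z)\to 0$ as $z\NT 1$. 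Since $\widetilde{\UU}$ is continuous at~$1$, the product also tends to~$0$ nontangentially. Combining this with the previous paragraph proves \eqref{eqabelPsiph}. The identity \eqref{wasknown} is then just the linear response formula \eqref{mystery} of Theorem~\ref{oldtheorem}.

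There is no real obstacle here once Theorem~\ref{abelianNTzero} is in hand: the argument is entirely formal, and the essential content—that the singular factor $\sigma_\varphi$ of $\Psi_\varphi$ is tamed at~$z=1$ precisely by the horizontality zero of~$\UU$—has been pre-packaged into the decomposition~\eqref{formula}. The only delicate point to verify carefully is that one has genuine nontangential (not merely radial) convergence of the product, which follows because the nontangential convergence $(z-1)\sigma_\varphi(z)\NT 0$ holds along any Stolz sector at~$1$ and $\widetilde{\UU}$ is continuous at~$1$.
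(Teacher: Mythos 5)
Your argument is correct and is essentially the paper's own proof: the paper likewise combines the decomposition \eqref{formula}, the vanishing $\UU(1)=0$ from horizontality, and \eqref{notan} of Theorem~\ref{abelianNTzero} at $e^{i\omega}=1$ to kill the term $\UU(z)\sigma_\varphi(z)$ nontangentially, then invokes \eqref{mystery} for \eqref{wasknown}. Your explicit factorisation $\UU(z)=(z-1)\widetilde{\UU}(z)$ merely spells out the step the paper leaves implicit.
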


Recalling \eqref{mystery}, if $X$ is horizontal, 
then $\lim_{z\NT 1}\Psi_\varphi(z)-\Psi^{hol}_\varphi(1)=\VV_\varphi(1)$ 
(see also the
convergent resummation~\eqref{mystery2} for $\VV_\varphi(1)$).

Replacing nontangential limit by Abelian limit,  the identity
\eqref{wasknown} was proved in
\cite{BS1}, under a stronger assumption on $X$, see Subsection~\ref{1.6}.

\begin{proof}[Proof of Corollary~\ref{corlinresp}]
By Theorem~\ref{oldtheorem}, we have $\UU(1)=0$. Thus, \eqref{notan} from
Theorem~\ref{abelianNTzero} for $e^{i\omega}=1$
gives $\lim_{z\NT 1} \UU(z) \sigma_\varphi(z)=0$, which yields~\eqref{eqabelPsiph}.
Then use \eqref{mystery} from Theorem~\ref{oldtheorem}.
\end{proof}

If the sequence $c_-=\{c_{n+1}\}_{n\le-1}$ is a precritical orbit then $\{ \varphi
(c_{n+1})\}_{n \in \integer}$ is a renascent right limit for $\{ \varphi
(c_{k+1})\}_{k \ge 0}$ giving rise to the \rrlc\ $\sigma_{\varphi,c_-}$
defined in \eqref{nosurprise}.  
If $\ph$ is not $f$-symmetric on $[c_2,c_1]$, there are uncountably many such
\rrlc s by
Theorem~\ref{thmpoly}(\ref{itemRRLph}). To each renascent right limit, we associate a candidate
for the ``outer" susceptibility function by  setting
\begin{equation}
\Psiout(z)=\UU(z) \sigma_{\varphi,c_-}(z)+ \VV_\varphi(z)+\Psi^{hol}_\varphi(z)\, .
\end{equation}
Note that $\sigma_{\varphi,c_-}(z)$ is holomorphic in $|z|>1$ (it vanishes at infinity), and
$\Psiout(z)$ is holomorphic in $1<|z|< \kappa^{-1}$.

We   say that a {\em precritical orbit $\{c_{n+1}\}_{n\le -1}$ is Birkhoff typical} if
for all $\varphi \in C^0$ we have
$$
\lim_{m \to \infty}
\frac{1}{m} \sum_{n=-1}^{-m} \varphi(c_{n+1}) = \int \varphi \, d \mu \, .
$$

Theorem~\ref{abelianNTzero} and  Corollary~\ref{corlinresp} have an analogue for $\sigma_{\varphi,c_-}$ and  $\Psiout$
(see Section~\ref{33}):

\begin{theorem}[Nontangential limits for outer functions]\label{noproofneeded}
Let $c_-=\{c_{n+1}\}_{n\le -1}$ be a  Birk\-hoff typical precritical orbit.

For any $\varphi \in C^0(I)$ and any $\omega \in \real$ with $e^{i\omega} \ne 1$, we have
\begin{equation*}
\lim_{z\NT e^{i\omega}} (z^{-1}-e^{-i\omega}) \sigma_{\varphi,c_-}(z^{-1})=0 \, 
\mbox{and thus } \lim_{z\NT e^{i\omega}} (z^{-1}-e^{-i\omega}) \Psiout(z^{-1})= 0 .
\end{equation*}
If $\int \varphi\, d\mu=0$ then the above also holds for $e^{i\omega}=1$.

Let
$\varphi \in C^0(I)$ with $\int \varphi\, d\mu=0$,
and let $X$ be horizontal. 
Then 
\begin{equation} \label{Psioutt}	
\lim_{z\NT 1} \Psiout(z^{-1})=\lim_{z\NT 1} \Psi_\varphi(z)= \VV_\varphi(1) + \Psi^{hol}_\varphi(1)\, .
\end{equation}
\end{theorem}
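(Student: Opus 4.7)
The plan is to parallel the proofs of Theorem~\ref{abelianNTzero} and Corollary~\ref{corlinresp}, with the forward sequence $\{\varphi(c_{k+1})\}_{k\ge 0}$ replaced by the reverse precritical sequence $\{\varphi(c_{-j})\}_{j\ge 0}$, and then to absorb the extra factors $\UU,\VV_\varphi,\Psi^{hol}_\varphi$ via the decomposition from Theorem~\ref{oldtheorem}.

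First I unfold~(\ref{nosurprise}) under $w=z^{-1}$: reindexing $j=-n-1\ge 0$, for $|z|<1$,
\[
\sigma_{\varphi,c_-}(z^{-1}) \;=\; -\sum_{j\ge 0}\varphi(c_{-j})\,z^{j+1} \;=\; -z\,\tau_{\varphi,c_-}(z),
\qquad
\tau_{\varphi,c_-}(z):=\sum_{j\ge 0}\varphi(c_{-j})\,z^j.
\]
Using $z^{-1}-e^{-i\omega}=-e^{-i\omega}z^{-1}(z-e^{i\omega})$, this rewrites as
\[
(z^{-1}-e^{-i\omega})\,\sigma_{\varphi,c_-}(z^{-1}) \;=\; e^{-i\omega}(z-e^{i\omega})\,\tau_{\varphi,c_-}(z),
\]
so the first claim reduces to $\lim_{z\NT e^{i\omega}}(z-e^{i\omega})\tau_{\varphi,c_-}(z)=0$, the same conclusion as Theorem~\ref{abelianNTzero} but for the coefficient sequence $\{\varphi(c_{-j})\}_{j\ge 0}$ instead of $\{\varphi(c_{k+1})\}_{k\ge 0}$.

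I would then invoke the Wiener--Wintner machinery of Appendix~\ref{BWW} with the Birkhoff-typicality hypothesis on~$c_-$ as input: by assumption, $\tfrac{1}{m}\sum_{j=0}^{m-1}\varphi(c_{-j})\to\int\varphi\,d\mu$ for every $\varphi\in C^0(I)$, which is the only averaging input used in the proof of Theorem~\ref{abelianNTzero}. The main obstacle here is that the classical Wiener--Wintner theorem is phrased for forward ergodic averages along orbits of~$f$, whereas $\{c_{-j}\}_{j\ge 0}$ is a sequence of preimages; the cleanest remedy is to formulate the Appendix~\ref{BWW} conclusion abstractly, as a statement about any complex sequence that satisfies a Birkhoff-type averaging property in a weakly mixing system, so that no distinction between the post- and precritical cases arises. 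The case $\omega=0$ with $\int\varphi\,d\mu=0$ is then handled by the same centering step as in Theorem~\ref{abelianNTzero}.

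To extend to $\Psiout$, I use $\Psiout(z^{-1})=\UU(z^{-1})\sigma_{\varphi,c_-}(z^{-1})+\VV_\varphi(z^{-1})+\Psi^{hol}_\varphi(z^{-1})$: by Theorem~\ref{oldtheorem}, $\UU,\VV_\varphi$ are holomorphic in $\{|w|>\lambda^{-1}\}$ and $\Psi^{hol}_\varphi$ in $\{|w|<\kappa^{-1}\}$, so all three are continuous at $w=e^{-i\omega}$; the factor $\UU(z^{-1})$ stays bounded, the terms $\VV_\varphi(z^{-1}),\Psi^{hol}_\varphi(z^{-1})$ absorb $(z^{-1}-e^{-i\omega})$ on their own, and the first step handles the remaining term. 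For the limit at $z=1$ under horizontality with $\int\varphi\,d\mu=0$, Theorem~\ref{oldtheorem} gives $\UU(1)=0$, so I factor $\UU(w)=(w-1)\widetilde{\UU}(w)$ with $\widetilde{\UU}$ holomorphic in $\{|w|>\lambda^{-1}\}$; then $\UU(z^{-1})\sigma_{\varphi,c_-}(z^{-1})=\widetilde{\UU}(z^{-1})(z^{-1}-1)\sigma_{\varphi,c_-}(z^{-1})$ tends to $0$ as $z\NT 1$ already by the first step, yielding $\lim_{z\NT 1}\Psiout(z^{-1})=\VV_\varphi(1)+\Psi^{hol}_\varphi(1)$. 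The coincidence with $\lim_{z\NT 1}\Psi_\varphi(z)$ is Corollary~\ref{corlinresp}, which we read as tacitly requiring Birkhoff typicality of the critical point here as well.
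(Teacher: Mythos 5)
Your overall route matches the paper's: the change of variable giving $(z^{-1}-e^{-i\omega})\,\sigma_{\varphi,c_-}(z^{-1})=e^{-i\omega}(z-e^{i\omega})\,\tau_{\varphi,c_-}(z)$ is correct, the paper indeed proves Theorem~\ref{noproofneeded} by rerunning the proof of Theorem~\ref{abelianNTzero} with $z^k$ replaced by $z^{-k}$ and $f^k$ by the inverse branches along $c_-$, the factorisation $\UU(w)=(w-1)\widetilde{\UU}(w)$ under horizontality is exactly the mechanism of Corollary~\ref{corlinresp}, and your reading that the middle equality in \eqref{Psioutt} tacitly uses Birkhoff typicality of the critical point is the right one.

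There is, however, a genuine gap at the single step where the precritical case differs from Theorem~\ref{abelianNTzero}, namely the Wiener--Wintner estimate $\sum_{j=0}^{m-1}e^{ij\omega}\varphi(c_{-j})=o(m)$. Your proposed remedy --- restating Appendix~\ref{BWW} ``abstractly, as a statement about any complex sequence that satisfies a Birkhoff-type averaging property'' --- is false as formulated: equidistribution of a point sequence with respect to $\mu$ does not control rotated sums. For instance one can build a $\mu$-equidistributed sequence $\{x_j\}$ whose values $\varphi(x_j)$ are chosen to correlate with $e^{-ij\omega}$, so that $\frac{1}{m}\sum_{j<m}e^{ij\omega}\varphi(x_j)$ stays bounded away from $0$; the Wiener--Wintner conclusion is a statement about orbits, not about equidistributed sequences. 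What rescues the precritical case is precisely the orbit structure your abstraction discards: since $c_{-k}=f^{\ell}(c_{-(k+\ell)})$, the correlation terms in the van der Corput bound of Lemma~\ref{WW} become $\overline{\varphi(c_{-(k+\ell)})}\,\varphi(c_{-k})=\bigl(\overline{\varphi}\cdot(\varphi\circ f^{\ell})\bigr)(c_{-(k+\ell)})$, i.e.\ Birkhoff sums of fixed continuous observables along the same precritical orbit; Birkhoff typicality of $c_-$ then gives convergence of these averages to $\int\overline{\varphi}\,(\varphi\circ f^{\ell})\,d\mu$, and the mixing assumption on $(f,\mu)$ yields the required Ces\`aro decay in $\ell$. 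This is exactly what the paper means by ``the proof of Lemma~\ref{WW} shows that, if the precritical orbit is Birkhoff typical, then it is Wiener--Wintner typical.'' Once the Wiener--Wintner step is stated for backward orbits rather than for arbitrary equidistributed sequences, the remainder of your argument (including the bookkeeping for $\UU$, $\VV_\varphi$, $\Psi^{hol}_\varphi$ and the limit \eqref{Psioutt}) goes through as written.
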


\begin{remark}[Birkhoff typical precritical orbits] A precritical orbit can accumulate at a repelling
periodic orbit, in which case it is not Birkhoff typical.
But we expect that generic
piecewise expanding maps have infinitely many 
Birkhoff typical precritical orbits.  Note that if there
 were
a unique Birkhoff typical  precritical orbit for $f$, then this would single out
a renascent limit giving an \rrlc\ more natural than all others. 
See also Remark~\ref{p10}.
\end{remark}


\subsection{Higher order horizontality and derivatives}\label{1.6}

In \cite[Prop. 4.6]{BS1} we gave a sufficient condition  for  existence of the Abelian limit at $z=1$ of the susceptibility
function in the piecewise expanding case.
We asked in \cite{BaN} whether this condition was necessary.  
By Corollary~\ref{corlinresp}, the answer to this question is negative
(horizontality with Birkhoff typicality of the postcritical orbit suffices). 
However the sufficient additional condition from \cite[Prop 4.6]{BS1}
\begin{align}\label{ordertwo}
\sum_{n=1}^{\infty} \frac{ nX(c_{n+1})}{(f^n)'(c_1)} =0\, ,
\end{align}
appears in a more natural
manner in  Theorem~\ref{derr} below.
We say that $X$ is horizontal of order $H\ge 1$
if
\begin{align}\label{orderH}
\sum_{n=\ell}^{\infty} \frac{n!} {(n-\ell)!}\frac{X(c_{n+1})}{(f^n)'(c_1)} =0\, ,
\quad \forall 0\le \ell \le H-1 \, .
\end{align}
The following result is proved in
Section~\ref{55}:

\begin{maintheorem}[Non tangential limits of derivatives of $\Psi$ and $\Psiout$ at $1$]\label{derr}
Assume that the critical point is Birkhoff typical. Let
$\varphi \in C^0(I)$ satisfy $\int \varphi\, d\mu=0$.  
Assume that $X$ is horizontal of order $H$ for some $H\ge 1$.

Then 
we have, for each $0\le \ell \le H-1$,
$$\lim_{z\NT 1}(\UU(z) \sigma_\varphi)^{(\ell)}(z)=0
\mbox{ and thus }
\lim_{z\NT 1}\Psi^{(\ell)}_\varphi(z)
=\VV_\varphi^{(\ell)}(1)+ (\Psi_\varphi ^{hol})^{(\ell)}(1)\, .
$$
In addition, for any Birkhoff  typical precritical orbit $c_-=\{c_{n+1}\}_{n \le -1}$, 
we have 
$$\lim_{z\NT 1}(\UU(z) \sigma_{\varphi,c_-})^{(\ell)}(z^{-1})=0\, ,
\quad \forall 0 \le \ell \le H-1\, ,
$$
so that 
$$\lim_{z\NT 1}(\Psiout)^{(\ell)}(z^{-1})=\lim_{z\NT 1}\Psi^{(\ell)}_\varphi(z)
=\VV_\varphi^{(\ell)}(1)+ (\Psi_\varphi ^{hol})^{(\ell)}(1)\, , \quad \forall 0 \le \ell \le H-1\, .
$$
\end{maintheorem}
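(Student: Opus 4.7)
The plan is to deduce Theorem~\ref{derr} from Theorem~\ref{abelianNTzero} (for the claims about $\Psi_\varphi$) and from the precritical-orbit part of Theorem~\ref{noproofneeded} (for the claims about $\Psi_{\varphi,c_-}$) by combining three ingredients: (i)~higher-order horizontality of $X$ is equivalent to $\UU$ having a zero of order at least~$H$ at $z=1$; (ii)~Cauchy's integral formula propagates the nontangential estimate on $(z-1)\sigma_\varphi(z)$ to all derivatives of $\sigma_\varphi$; (iii)~Leibniz's rule assembles the previous two. I expect ingredient~(i) to be the main obstacle, whereas (ii)--(iii) should amount to a routine Cauchy--Leibniz manipulation.

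For ingredient~(i), using the explicit formula for $\UU$ recalled in Appendix~\ref{gapp}, I would verify that $\UU^{(\ell)}(1)/\ell!$ coincides, up to a nonzero universal factor, with the weighted sum $\sum_{n\ge\ell}\frac{n!}{(n-\ell)!}\frac{X(c_{n+1})}{(f^n)'(c_1)}$ of~\eqref{orderH}. The case $\ell=0$ recovers the criterion $X$ horizontal $\iff \UU(1)=0$ of Theorem~\ref{oldtheorem}, and the general case should follow by the analogous short manipulation carried out on the explicit expansion of $\UU$ near $z=1$. Once this is done, horizontality of order $H$ permits writing $\UU(z)=(z-1)^H W(z)$ with $W$ holomorphic in $|z|>\lambda^{-1}$.

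For ingredient~(ii), fix a nontangential sector $S$ at $1$ and let $z_0\in S$ be near $1$; the disc of radius $\delta=c\,|1-z_0|$ centred at $z_0$ is contained, for $c$ small enough depending on the opening of $S$, in a slightly larger nontangential sector $S'$. Theorem~\ref{abelianNTzero} with $\int\varphi\,d\mu=0$ gives $|(1-w)\sigma_\varphi(w)|\to 0$ uniformly for $w\in S'$ with $|1-w|$ small, so $|\sigma_\varphi(w)|=o(|1-w|^{-1})=o(|1-z_0|^{-1})$ uniformly on the disc, and Cauchy's integral formula yields
\begin{equation*}
\bigl|\sigma_\varphi^{(r)}(z_0)\bigr|\le\frac{r!}{\delta^r}\max_{|w-z_0|=\delta}|\sigma_\varphi(w)| = o\bigl(|1-z_0|^{-r-1}\bigr),
\end{equation*}
that is, $\lim_{z\NT 1}(z-1)^{r+1}\sigma_\varphi^{(r)}(z)=0$ for every $r\ge 0$.

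For ingredient~(iii), Leibniz applied to $\UU\sigma_\varphi=(z-1)^H W(z)\sigma_\varphi(z)$ gives
\begin{equation*}
(\UU\sigma_\varphi)^{(\ell)}(z)=\sum_{j=0}^\ell\binom{\ell}{j}\bigl[(z-1)^H W(z)\bigr]^{(j)}\sigma_\varphi^{(\ell-j)}(z),
\end{equation*}
and each factor $\bigl[(z-1)^H W\bigr]^{(j)}$ is of the form $(z-1)^{H-j} g_j(z)$ with $g_j$ bounded near $z=1$. Since $j\le\ell\le H-1$ forces $H-j\ge (\ell-j)+1$, ingredient~(ii) applied with $r=\ell-j$ shows that each summand tends nontangentially to zero. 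Combined with the decomposition~\eqref{formula} and the holomorphy of $\VV_\varphi$ and $\Psi^{hol}_\varphi$ near $z=1$, this proves the first assertion. The second assertion follows by the same Leibniz--Cauchy argument carried out in the region $|w|>1$ for $\sigma_{\varphi,c_-}$: the change of variable $w=z^{-1}$ turns $\sigma_{\varphi,c_-}(z^{-1})$ into the power series $-\sum_{m\ge 1}\varphi(c_{-m+1})z^m$ in the open unit disc, and the precritical analogue of Theorem~\ref{abelianNTzero} contained in Theorem~\ref{noproofneeded} replaces Theorem~\ref{abelianNTzero} throughout.
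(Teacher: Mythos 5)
Your proposal is correct, but the analytic core is carried out differently from the paper, so it is worth comparing. The paper does not use a Cauchy-integral argument: it writes $\sigma_\varphi(z)=(1-z)S(z)$ with $S(z)=\sum_k S_k(1)z^k$ (Abel summation, as in the proof of Theorem~\ref{abelianNTzero}) and estimates $S^{(\ell)}(z)$ directly from the coefficient bound $S_k(1)=o(k)$ supplied by Birkhoff/Wiener--Wintner typicality, obtaining $\lim_{z\NT 1}(z-1)^{\ell+2}S^{(\ell)}(z)=0$ and then peeling off the relevant term $(z-1)^H\sigma_\varphi^{(H-1)}(z)$ inductively; the precritical case is handled by repeating the same coefficient estimates with $z^k$ replaced by $z^{-k}$. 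You instead reuse Theorem~\ref{abelianNTzero} (resp.\ Theorem~\ref{noproofneeded}) as a black box and propagate the sectorial bound $|(1-w)\sigma_\varphi(w)|=o(1)$ to all derivatives via Cauchy's formula on discs of radius comparable to $|1-z_0|$ inside a slightly larger Stolz sector; this is a legitimate and arguably cleaner route (it avoids revisiting the partial sums, and the same shrinking-disc argument works verbatim in the exterior region $|w|>1$ for $\sigma_{\varphi,c_-}$, since interior nontangential approach of $z$ to $1$ becomes exterior nontangential approach of $w=z^{-1}$), while the paper's coefficientwise method is what generalises to the quantitative LIL-type estimates elsewhere in the article. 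The common ingredient is the factorisation $\UU(z)=(z-1)^H W(z)$, and here one caveat about your ingredient~(i): from \eqref{defU} one finds that $\UU^{(\ell)}(1)$ is, up to the factor $\pm s_1$, the sum $\sum_{n\ge 1} n(n+1)\cdots(n+\ell-1)\,X(c_{n+1})/(f^n)'(c_1)$ with \emph{rising} factorial weights, which for $\ell\ge 2$ does not literally coincide with the falling-factorial sum in \eqref{orderH} (the discrepancy is already present in \eqref{derivative'}); nevertheless, since the constant together with the rising factorials of degrees $1,\dots,H-1$ and the falling factorials of degrees $0,\dots,H-1$ span the same space of polynomials of degree $\le H-1$ (and all these weights vanish at $n=0$ except the constant), horizontality of order $H$ is still equivalent to $\UU(1)=\UU'(1)=\dots=\UU^{(H-1)}(1)=0$, which is exactly what your Leibniz step needs, so the gap is only in the stated term-by-term identification and is repaired by this one-line span argument (or by citing the equivalence recorded in Appendix~\ref{gapp}).
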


\begin{remark} [Elusiveness of quasi-analytic extensions]
Horizontality of order $\ell$ means that
$\UU^{(k)}(1)=0$ for $k=0, \dots, \ell-1$.
Since $\UU$ is holomorphic close to $z=1$, horizontality of all orders would mean
that $\UU\equiv 0$. 
In addition,  since $X$ is continuous, 
\eqref{defU} below implies that $\UU \equiv 0$  
is  equivalent to $X \equiv 0$ if the postcritical
orbit is dense. So there
is no hope of proving the existence of a non holomorphic quasi-analytic extension
at $z=1$ by using the horizontality mechanism. 
\end{remark}

\subsection{Law of the iterated logarithm}\label{1.5}

In order to get more precise results, we shall make assumptions involving the
(rotated) law of the iterated logarithm (LIL) for the postcritical orbit.
Let us first recall known results. 
We say that $\varphi$ is a coboundary if there exists  $\psi \in C^0$ so that
$\varphi=\psi -\psi \circ f$.
The ordinary LIL for a piecewise expanding
map $f$ says
(see \cite{HK}, who prove an almost sure invariance principle from which the LIL
follows, {see also \cite[\S6]{Bro}, and} \cite{Go} and references therein)
that for any $\varphi$ {of bounded variation} 
which is not a coboundary 
there exists $C(\varphi)>0$ so that for Lebesgue amost every $x$
\begin{equation}\label{limsup}
\limsup_{m \to \infty}\biggl | \frac{\sum_{k=1}^m \varphi(f^k(x))}
{\sqrt {m \log \log m}} \biggr | = C(\varphi)\, .
\end{equation}
It is not unreasonable to expect that  the postcritical
orbit is typical for the LIL, i.e., that we may take $x=c$  in (\ref{limsup}).  
(Indeed, several experts \cite{Be,Sh} expect that the postcritical orbit may be
generically typical for the LIL in the setting of smooth unimodal maps.)

Let now $\omega\in \real$ be so that $e^{i\omega}\ne 1$.
We say that $\varphi$ is
an $\omega$-coboundary if there exists
$\psi\in C^0$ so that $\varphi=\psi -e^{i\omega} \psi \circ f$.
We refer to \cite{Wi} for an $\omega$-rotated LIL
in the probabilistic setting. In a deterministic dynamical setting, the
rotated LIL says that for every real $\omega$ and any H\"older%
\footnote{Very recent results \cite{CC} on the CLT for rotated ergodic sums
indicate that the smoothness assumption on $\varphi$ may be unnecessary if one is willing to restrict to an $f$-dependent full measure set of $\omega$'s.}
$\varphi$, which is not
an $\omega$-coboundary,
there exists $C(\varphi, \omega)>0$ so that for Lebesgue almost every $x$
\begin{equation}\label{rLIL}
\limsup_{m \to \infty} \biggl | \frac{\sum_{k=1}^m e^{i k\omega} \varphi(f^k(x))}
{\sqrt {m \log \log m}} \biggr | = C(\varphi,\omega)\, .
\end{equation}
 To obtain the above,  first put together the results 
of \cite{FMT}  on mixing skew products 
$(x, \theta)\mapsto (f(x), \theta+\omega)$
and observables $\Phi(\theta, x)=e^{i\theta} \varphi(x)$
for H\"older $\varphi$ to get the almost sure invariance
principle and thus the LIL for a.e. $(\theta, x)$. 
Then, use \cite{MNJLM} to remove the mixing
assumption, allowing rational $\omega$.
Finally, apply \cite{MNSD} to get the LIL
for almost every $x$ while fixing $\theta=0$. See
also   \cite[Remark 2.5]{MNSD} for Young towers (which include
our piecewise expanding maps) and \cite[\S 5.b]{NMA} for the nondegeneracy-coboundary 
condition. (We are grateful to I. Melbourne for explaining his
results.)

The above discussion  gives
hope that the following typicality condition  holds for 
a large (at least countable
dense?) set of  $\omega$s
if  $f$ is generic and $\varphi$ is smooth:

The critical point is {\em typical for the upper $\omega$-rotated LIL}
and $\varphi\in C^0$ if
$$
\limsup_{m \to \infty} \biggl | \frac{\sum_{k=1}^m e^{i k\omega} \varphi(c_k)} 
{\sqrt {m \log \log m}} \biggr | < \infty\, .
$$
(Observe that we may as well write ``$\sup$'' instead of ``$\limsup$.'')

The following theorem is proved in Section~\ref{33}:
\begin{maintheorem}[Consequences of upper $\omega$-rotated LIL
typicality of $c$] \label{abelianloglog}
Let $\varphi \in C^0(I)$ and $\omega \in \real$ with $e^{i\omega}\ne 1$.
Assume that the critical point is typical
for the upper $\omega$-rotated LIL and $\varphi$.
Then, for any open sector $\gS$ contained in
$\{ \la\ii < |z| < 1 \}$
with vertex at~$e^{i\om}$, there exists $C(\gS)>0$ such that
\begin{equation}\label{notanLIL}
\max \{ |\sig_\ph(z)|, \, |\Psi_\ph(z)|\} \le
\frac{C(\gS)}{ |z-e^{i\om}|^{1/2}} \Big(\log\log\frac{1}{|z-e^{i\om}|}\Big)^{1/2} \, ,
\qquad \forall z\in\gS \, .
\end{equation}
If in addition $\int \varphi\, d\mu=0$ then the above also holds for $e^{i\omega}=1$.
\end{maintheorem}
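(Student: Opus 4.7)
The plan is to combine Abel (summation-by-parts) with the hypothesis, which gives us a bound on the partial sums
$$B_m = \sum_{k=0}^{m-1} e^{ik\om}\ph(c_{k+1}),$$
namely $|B_m|\le C\sqrt{m\log\log m}$ for $m$ large enough; in the case $e^{i\om}=1$, the extra assumption $\int\ph\,d\mu=0$ is exactly what is needed so that the Birkhoff ergodic theorem does not destroy this estimate. Writing $z = e^{i\om} w$, we have $|z-e^{i\om}|=|1-w|$ and
$$\sig_\ph(z) = \sum_{k=0}^\infty e^{ik\om}\ph(c_{k+1}) w^k.$$
Abel summation in the open unit disc yields (since $B_{N+1}w^{N+1}\to 0$ as $N\to\infty$ thanks to $|w|<1$)
$$\sig_\ph(z) = (1-w)\sum_{k=0}^\infty B_{k+1}\, w^k.$$

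The core analytic step is then to estimate the series $\sum_{k\ge 0}\sqrt{(k+1)\log\log(k+3)}\,|w|^k$ as $|w|\to 1^-$. By comparing with the integral $\int_0^\infty\sqrt{x\log\log x}\,e^{-x(1-|w|)}\,dx$ and performing the substitution $u=x(1-|w|)$, one gets
$$\sum_{k=0}^\infty \sqrt{(k+1)\log\log(k+3)}\,|w|^k \le \frac{C}{(1-|w|)^{3/2}}\Big(\log\log\tfrac{1}{1-|w|}\Big)^{1/2},$$
for $|w|$ sufficiently close to $1$. Combined with the LIL estimate $|B_{k+1}|\le C\sqrt{(k+1)\log\log(k+3)}$, this gives
$$|\sig_\ph(z)| \le \frac{C\,|1-w|}{(1-|w|)^{3/2}}\Big(\log\log\tfrac{1}{1-|w|}\Big)^{1/2}.$$

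Now comes the use of the sector hypothesis: in any open sector $\gS$ with vertex $e^{i\om}$ contained in the open unit disc, there is a constant $\ga(\gS)>0$ with $|1-w|\le \ga(\gS)^{-1}(1-|w|)$ and $1-|w|\le |1-w|$. Hence $|1-w|/(1-|w|)^{3/2}\le \ga(\gS)^{-1}|1-w|^{-1/2}$, and substituting $|1-w| = |z-e^{i\om}|$ yields the desired bound for~$\sig_\ph$ on~$\gS$ (absorbing the finitely many initial terms of the series into the constant $C(\gS)$).

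For the bound on $\Psi_\ph$ we invoke Theorem~\ref{oldtheorem}: we have $\Psi_\ph(z) = \UU(z)\sig_\ph(z)+\VV_\ph(z)+\Psi^{hol}_\ph(z)$, where $\UU$ and $\VV_\ph$ are holomorphic in $|z|>\la^{-1}$ and $\Psi^{hol}_\ph$ is holomorphic in $|z|<\ka^{-1}$ with $\ka<1$. Since $\gS\subset\{\la^{-1}<|z|<1\}$ has $e^{i\om}$ in the common domain of holomorphy, the three summands $\UU$, $\VV_\ph$, $\Psi^{hol}_\ph$ are uniformly bounded on~$\gS$, so the estimate for $\sig_\ph$ transfers to $\Psi_\ph$ up to adjusting $C(\gS)$. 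The main technical obstacle is the integral estimate for the weighted geometric series (step three): one must check that the $\log\log$ factor factors out cleanly, which works because $\log\log(u/(1-|w|))\le \log\log(1/(1-|w|))+O(1)$ for $u$ in the effective range of integration, but requires separating small and large $u$ with some care.
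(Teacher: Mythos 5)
Your proposal is correct and follows essentially the same route as the paper: Abel summation combined with the rotated-LIL bound on the partial sums, the key estimate $\sum_k \sqrt{k\log\log k}\, r^k = O\big((1-r)^{-3/2}(\log\log\tfrac{1}{1-r})^{1/2}\big)$ as $r\to1^-$, the sector inequality $1-|z|\le|1-z|\le C(\gS)(1-|z|)$, and Theorem~\ref{oldtheorem} to transfer the bound from $\sig_\ph$ to $\Psi_\ph$. The only (inessential) divergence is in proving the weighted-series estimate, where the paper splits the sum at $k\approx(1-r)^{-1}$ and integrates by parts while you compare with a Laplace-type integral and factor out the $\log\log$ term; both are routine and valid.
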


The above result may indicate that, generically, $\Psi_\varphi$ could 
have uncountably  many singularities on the unit circle  which are ``slightly worse" than ramifications 
of order two. If true, this fact could be related to the polygenous property
of $\{\varphi(c_{k+1})\}_{k\ge 0}$. See also Remark~\ref{LIL} about potential lower bounds.

\smallskip

Theorem~\ref{abelianloglog} has an obvious analogue for the outer functions $\sigma_{\varphi,c_-}$ and  $\Psiout$
associated to precritical orbits which are typical for an upper rotated LIL. We refrain from making
a formal statement.

\begin{remark}[Rotated-LIL typical precritical orbits] \label{p10}
Remark~3.6~b) in
\cite{MNJLM} says the following: ``Passing to the natural extension \cite{Ro},
it follows from the methods in \cite{FMT} that the law of the iterated logarithm
(and much more, including the almost sure invariance principle) can be proved in
backwards time." This gives the LIL for H\"older observables and some backwards orbit of
almost every $x$.  It is yet unknown whether  the critical point
itself satisfies the LIL, but H. Bruin \cite{Br}
has heuristical arguments which  encourage us to expect that,   generically,
some precritical orbit is
typical for the LIL in the piecewise expanding setting.
\end{remark}

\subsection{Open questions}

Here are three open questions which arise
from our results (see  also Remark~\ref{LIL}): 

\begin{itemize}
\item
If $X$ is not horizontal and $\varphi$ is not a coboundary, does the
Abelian (or nontangential) limit of $\Psi(z)$
as $z\to 1$ ever exist? (See also \eqref{cobb}.)%
\footnote{Clearly, the series evaluated at $z=1$ is in general divergent.
In \cite[Prop. 4.5]{BS1} we proved that the resummation there diverges for
some $C^\infty$ function  $\varphi$.}
\item
If $X$ is horizontal but not horizontal of order two,
does the
Abelian (or nontangential) limit  of $\Psi'(z)$ as $z\to 1$ ever
exist?
\item
For $\omega\ne 0$, if $\varphi$ is not an  $\omega$-coboundary,
does the
Abelian (or nontangential) limit as $z\to e^{i\omega}$ ever converge to a finite number? 
(See also \eqref{cobb}.) Does the answer depend on the diophantine properties
of $\omega$?
\end{itemize}

We end this section by some comments on the smooth unimodal case.
The piecewise expanding situation considered here can be viewed as
a toy model for the more difficult smooth unimodal case
\cite{Ru2, Ru3, BS2}.  In the postcritically finite (Misiurewicz-Thurston)
smooth unimodal
case, the susceptibility
function admits a meromorphic extension in a disc of radius larger
than $1$. In some Misiurewicz--Thurston cases, Ruelle \cite{Ru2} was able to
prove that $z=1$ is not a pole without assuming horizontality.
However,  there are examples  \cite{BBS} of Misiurewicz--Thurston parameters
where  (Whitney)
linear response is violated  when horizontality does not hold. 
Existence of an extension which is holomorphic
at $z=1$  thus does
not guarantee linear response.

 When the postcritical orbit of a smooth unimodal
map is infinite  and slowly recurrent (Collet-Eckmann, topologically
slowly recurrent, or polynomially recurrent),  it is expected
\cite{Ru3}
that the natural boundary will be a circle of radius strictly smaller than $1$,
while the derivative of the SRB measure (in the horizontal case)
should be related to a suitable extension%
\footnote{Perhaps only along the real axis? The nature
of this extension remains admittedly mysterious.} of $\Psi_\ph$ evaluated at $z=1$,
at least under generic assumptions (as in Corollary~\ref{corlinresp}). 
In the analytic Misiurewicz unimodal case
an  analogue $\tilde \sigma_\varphi$ of $\sigma_\varphi$ can be obtained from 
\cite[end of \S 17, \S 16(b)]{Ru3}. 
Horizontality would perhaps guarantee that the contribution
$\widetilde \UU(z) \tilde \sigma_{\varphi,c_-}(z)$ of  any outer function corresponding
to a renascent right limit continuation $\tilde \sigma_{\varphi,c_-}(z)$
of $\tilde \sigma_\varphi(z)$ would vanish at $z=1$ (in the same way as horizontality implied
$\UU(1)=0$ in the present setting). 
In the Misiurewicz--Thurston case, the above mentioned result of
Ruelle \cite{Ru2} implies that the singular term $\widetilde \UU(z)\tilde \sigma_\varphi$
is  meromorphic with no pole at $z=1$, but this  neither implies that 
this singular term vanishes at $z=1$, nor that $\Psi_\varphi(z)$ is
real analytic on $[0,1]$.  Summarising,
although we are perhaps closer to understanding
the misleading behaviour \cite{Ru2} of finite postcritical orbits, the Borel monogenic extension  hoped
for in \cite{BaN}
remains elusive.


\section{Strong natural boundary and renascent right limits}\label{22}

In this section, we prove Theorems~\ref{natural} and~\ref{thmpoly}.
We begin with a simple lemma about right limits (which is implicit in \cite{BrSi}):

\begin{lemma}\label{diagonal}
Let $\{a_k\}_{k=0}^\infty$ be a sequence in a compact metric space~$E$.  
Then any increasing sequence of positive integers $\{m_j\}_{j=0}^\infty$ admits a 
subsequence $\{k_j\}_{j=0}^\infty$ such that, for each $n\in\integer$,
the limit 
$\lim_{j\to\infty} a_{n+k_j}$
exists.

In particular, for any accumulation point~$b$ of $\{a_k\}_{k=0}^\infty$, 
there exists a right limit $\{ b_n\}_{n=-\infty}^\infty$ with $b_0=b$.
\end{lemma}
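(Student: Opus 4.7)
The plan is a standard Cantor diagonal extraction, made only slightly delicate by the fact that we need convergence along every shift $n\in\integer$ simultaneously, including negative~$n$.

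First I would observe that for fixed $n\in\integer$, the quantity $a_{n+k}$ is defined as soon as $k\ge\max(0,-n)$, so if $\{k_j\}$ is any sequence of positive integers tending to infinity then $a_{n+k_j}$ is well-defined for all sufficiently large~$j$; the issue is only convergence. Fix an enumeration $\integer=\{n_1,n_2,n_3,\ldots\}$, say $0,1,-1,2,-2,\ldots$, and set $M^{(0)}=\{m_j\}_{j=0}^\infty$. Inductively, assuming $M^{(\ell-1)}=\{m_j^{(\ell-1)}\}_{j=0}^\infty$ has been constructed, use compactness of~$E$ to extract a subsequence $M^{(\ell)}=\{m_j^{(\ell)}\}_{j=0}^\infty$ of $M^{(\ell-1)}$ along which $\{a_{n_\ell+m_j^{(\ell)}}\}_{j}$ converges in~$E$. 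Define the diagonal $k_j:=m_j^{(j)}$. For each $\ell\ge 1$, the tail $\{k_j\}_{j\ge\ell}$ is a subsequence of $M^{(\ell)}$, so $\lim_{j\to\infty} a_{n_\ell+k_j}$ exists. Since $\ell$ was arbitrary, $\lim_{j\to\infty} a_{n+k_j}$ exists for every $n\in\integer$, proving the first assertion.

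For the second assertion, given an accumulation point $b\in E$ of the sequence $\{a_k\}_{k=0}^\infty$, pick an increasing sequence of positive integers $\{m_j\}_{j=0}^\infty$ with $a_{m_j}\to b$. Apply the first part to $\{m_j\}$ to obtain a further subsequence $\{k_j\}_{j=0}^\infty$ along which $b_n:=\lim_{j\to\infty} a_{n+k_j}$ exists for every $n\in\integer$. Taking $n=0$, we have $b_0=\lim_{j\to\infty} a_{k_j}$; since $\{k_j\}$ is a subsequence of $\{m_j\}$, this limit equals~$b$. By definition $\{b_n\}_{n=-\infty}^\infty$ is a right limit of $\{a_k\}_{k=0}^\infty$, and it satisfies $b_0=b$, as required.

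There is essentially no main obstacle here; the only thing to be careful about is handling negative indices $n$ (which is automatic because $k_j\to\infty$) and verifying that the diagonal subsequence is eventually contained in each~$M^{(\ell)}$, which is immediate from the construction. The lemma contains no hypothesis on the dynamics, so no result from the paper is needed in the proof.
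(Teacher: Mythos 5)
Your proof is correct. It differs from the paper's write-up only in the choice of compactness tool: the paper pads each shifted sequence into a genuine two-sided sequence in $E^{\integer}$ (filling the undefined entries $n<-k$ with an arbitrary point $e\in E$) and invokes Tikhonov's theorem for the product topology to extract one convergent subsequence, whereas you carry out an explicit Cantor diagonal extraction over an enumeration of $\integer$. The two arguments are essentially equivalent---sequential compactness of a countable product of metric spaces is exactly what the diagonal argument proves---so yours is marginally more self-contained (no appeal to Tikhonov, no padding trick needed since you note $a_{n+k_j}$ is defined for all large $j$), while the paper's is marginally shorter on the page. Your handling of the second assertion (choosing $\{m_j\}$ with $a_{m_j}\to b$ and applying the first part) matches the paper exactly. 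The only detail left tacit, and worth a half-sentence, is that the diagonal sequence $k_j=m_j^{(j)}$ is strictly increasing; this follows since each $M^{(\ell)}$ is an increasing sequence and a subsequence of $M^{(\ell-1)}$, so $k_{j+1}=m_{j+1}^{(j+1)}\ge m_{j+1}^{(j)}>m_j^{(j)}=k_j$.
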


\begin{proof}
Pick an arbitrary point~$e$ in~$E$, and define for each $k\ge0$ a two-sided sequence 
$\vec a^{[k]} \in E^\integer$ by
$a^{[k]}_n = a_{n+k}$ if $n \ge -k$ and
$e$ if $n < -k$.
By Tikhonov's theorem, $E^\integer$ is compact for the product topology, 
hence the sequence $(\vec a^{[m_j]})_{j\ge0}$ admits a subsequence
$(\vec a^{[k_j]})_{j\ge0}$ which converges to a limit~$\vec b$ in $E^\integer$,
which exactly means $\lim_{j \to \infty} a_{n+k_j}= b_n$ for each $n\in\integer$.

The last statement follows by choosing $\{m_j\}_{j=0}^\infty$ so that 
$\lim_{j \to \infty} a_{m_j} = b$, and applying the first part of the lemma.
\end{proof}


To prove Theorem~\ref{natural}, we shall also need a 
well-known consequence of our mixing assumption:

\begin{lemma}	\label{lempastepsdense}
For any $x_0 \in (c_2,c_1)$ and any $\eps>0$, there exists a positive integer $\ell$ so
that the set $f^{-\ell}({x_0})$ is $\eps$-dense in $I=[a,b]$.
\end{lemma}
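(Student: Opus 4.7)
The plan is to reduce the $\eps$-density statement to a uniform covering property: find an integer $\ell$ such that $f^\ell(J)\supset[c_2,c_1]$ for every closed subinterval $J\subset I$ with $|J|\ge\eps$. Since $x_0\in(c_2,c_1)\subset f^\ell(J)$, this immediately gives $J\cap f^{-\ell}(x_0)\ne\emptyset$, which is the required $\eps$-density (up to a harmless factor of~$2$ in~$\eps$).

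For the single-interval version --- the existence, for each fixed interval $J\subset I$ with $|J|>0$, of some integer $N_J$ with $f^{N_J}(J)\supset[c_2,c_1]$ --- I would combine expansion with topological mixing. The hypothesis $|f'|\ge\lambda>1$ on each monotonicity piece implies that some iterate $f^{n_0}(J)$ must contain the critical point $c$: otherwise $f^n|_J$ would be monotonic and $|f^n(J)|\ge\lambda^n|J|$, contradicting $f^n(J)\subset I$ for large $n$. Then $f^{n_0+1}(J)$ is an interval of positive length having $c_1$ as an endpoint, so it meets $[c_2,c_1]$ in a nonempty relatively open set. Applying topological mixing of $f$ on $[c_2,c_1]$ (equivalent, in the piecewise expanding setting, to the ``locally eventually onto'' property that every nonempty open subinterval of $[c_2,c_1]$ has some iterate equal to $[c_2,c_1]$) gives $f^{n_1}(J)\supset[c_2,c_1]$ for some $n_1\ge n_0+1$.

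For the uniform version, I would appeal to compactness of the space $\gK_\eps$ of closed subintervals $J\subset I$ with $|J|\ge\eps$ in the Hausdorff metric (which makes $\gK_\eps$ homeomorphic to a compact subset of $\real^2$ via endpoints). Given $J=[p,q]\in\gK_\eps$, fix $\delta\in(0,\eps/4)$ and apply the previous step to the slightly smaller interval $J_0=[p+\delta,q-\delta]$: some integer $N_{J_0}$ satisfies $f^{N_{J_0}}(J_0)\supset[c_2,c_1]$. Any $J'\in\gK_\eps$ at Hausdorff distance less than $\delta/2$ from $J$ then contains $J_0$, so $f^{N_{J_0}}(J')\supset[c_2,c_1]$ as well. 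A finite subcover of the resulting open cover of $\gK_\eps$ produces $\ell=\max_i N_{J_{0,i}}$; forward invariance $f([c_2,c_1])=[c_2,c_1]$ guarantees that once $f^n(J)\supset[c_2,c_1]$ the containment persists for all larger $n$, so the same $\ell$ works uniformly.

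The main technical point is the passage from the individual to the uniform statement: the relation $f^N(J)\supset[c_2,c_1]$ is fragile under Hausdorff perturbations of $J$, for instance when the endpoints of $f^N(J)$ happen to coincide with $c_2$ or $c_1$. The shrinking trick $J\mapsto J_0\subset J^\circ$ bypasses this by replacing a delicate condition on $J$ by a condition on a strictly smaller interval that is automatically inherited by every nearby $J'$.
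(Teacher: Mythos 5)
Your argument is correct in substance but follows a genuinely different route from the paper's. The paper simply quotes a uniform covering theorem for piecewise expanding maps (Hofbauer's result, via Buzzi and van den Bedem--Chernov, together with Liverani): for each $n$ there is a single $\ell(n)$ such that $f^{\ell(n)}(I_n)$ covers $[c_2,c_1]$ up to finitely many points for \emph{every} monotonicity interval $I_n$ of $f^n$; since such $I_n$ have length at most $|b-a|\lambda^{-n}$, choosing $n$ with $|b-a|\lambda^{-n}<\eps$ and $\ell=\ell(n)$ finishes the proof in three lines. You instead establish a per-interval covering statement (expansion forces an iterate of $J$ to contain $c$, hence the next iterate is an interval abutting $c_1$, hence it enters the core) and then manufacture the uniformity yourself via compactness of the space of subintervals of length $\ge\eps$ in the Hausdorff metric, the shrinking trick, and the persistence observation $f([c_2,c_1])=[c_2,c_1]$ (which is indeed true, since $f([c,c_1])=[c_2,c_1]$). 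What the paper's route buys is brevity, at the price of citing a stronger, uniform covering theorem; what your route buys is independence from that uniform statement, at the price of the compactness bookkeeping.

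The one step you should not leave as a parenthetical assertion is the claimed equivalence of topological mixing with ``locally eventually onto'': the implication you use (mixing implies every nonempty open subinterval of the core eventually covers the core) is precisely the nontrivial covering input that the paper imports from the literature, and the exact equality $f^{n}(U)=[c_2,c_1]$ is even slightly stronger than what those references assert (covering up to finitely many points). Fortunately, the weaker statement your scheme actually needs --- that $x_0\in f^n(U)$ for all sufficiently large $n$, for any nonempty relatively open interval $U\subset[c_2,c_1]$ --- has a short direct proof: since $x_0\in(c_2,c_1)$, apply the definition of mixing to $U$ and to two small open windows immediately to the left and to the right of $x_0$; for all large $n$ the set $f^n(U)$, being an interval, meets both windows and hence contains $x_0$ by connectedness. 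Substituting this for the asserted equivalence (it also renders the persistence step superfluous, since the containment holds for all large $n$) makes your proof complete and self-contained.
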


\begin{proof}
Since $f$ is topologically mixing on $[c_2,c_1]$, it has (see%
\footnote{
See also  \cite[Lemma 2]{Ko}
and \cite{Ke78} for earlier proofs of the ``weak covering''  property, which
by \cite[Thm 4.4]{Li} implies covering.} the reference to
Hofbauer in \cite[Prop.~ 2.6 and Appendix B]{Bu}, as explained in
\cite[p.\ 641]{vBC})
the following covering property \cite{Li}: For every $n \ge 1$ there is 
$\ell(n) <\infty$ such that, for every interval of monotonicity $I_n$ of $f^n$, 
the set $f^{\ell(n)} (I_n)$ covers $[c_2,c_1]$ up to finitely many points (note that
Hofbauer did not assume continuity of $f$). 
Since $f^{\ell(n)}$ is continuous, $f^{\ell(n)} (I_n)$ is an interval and
hence contains~${x_0}$.
Therefore, $f^{-\ell(n)}({x_0})\cap I_n\ne \emptyset$ for each interval of monotonicity
of $f^n$.  It suffices to take $n(\epsilon)$ so that each interval of
monotonicity of $f^n$ has length $< \epsilon$ (this is possible since each such
interval has length $\le |b-a|\lambda^{-n}$).
\end{proof}


\begin{proof}[Proof of Theorem~\ref{natural}]
Let $y,\ti y \in [c_2,c_1]$ be such that $\ph(y)\neq \ph(\ti y)$.
Since $\ph$ is continuous, we can take $\de>0$ so that
\[
\text{$x\in J$ and $\ti x\in\ti J$} \quad \Longrightarrow \quad
\ph(x)\neq\ph(\ti x)\, ,
\]
where $J = [y-\de,y+\de]\cap [c_2,c_1]$ and $\ti J = [\ti y-\de,\ti y+\de]\cap [c_2,c_1]$
are non-trivial intervals. 
Since $f$ is mixing, by Lemma~\ref{lempastepsdense} applied to $x_0=c$, we can find
$\ell\ge 1$ such that $f^{-\ell}(c)\cap J$ and $f^{-\ell}(c)\cap \ti J$ are
non-empty.

Pick $x \in f^{-\ell}(c)\cap J$ and $\ti x \in f^{-\ell}(c)\cap \ti J$.
Since $\{c_{k}\}_{k\ge 0}$ is dense in $[c_2,c_1]$, we can find two integer
sequences $\{m_j\}$ and $\{\ti m_j\}$ such that
$x = \lim c_{m_j+1}$ and $\ti x = \lim c_{\ti m_j+1}$.
By Lemma~\ref{diagonal}, we get two right limits $\{ x_n \}_{n\in\integer}$ and
$\{ \ti x_n \}_{n\in\integer}$ for the sequence $\{c_{k+1}\}_{k\ge 0}$ such that
$x_0 = x$ and $\ti x_0 = \ti x$.
By construction, $f^\ell(x_0) = f^\ell(\ti x_0) = c$ but $\ph(x_0)\neq\ph(\ti x_0)$.

Now the relation $c_{k+2} = f(c_{k+1})$ and the continuity of~$f$ imply that 
$x_{n+1} = f(x_n)$ and $\ti x_{n+1} = f(\ti x_n)$ for every $n\in\integer$, hence 
\[
n \ge \ell \quad\Longrightarrow\quad x_n = \ti x_n\, .
\]
By continuity of~$\ph$, the two-sided sequences $\{b_n\}_{n\in\integer}$ and
$\{\ti b_n\}_{n\in\integer}$ defined by
\[
b_n = \ph(x_n)\, , \quad \ti b_n = \ph(\ti x_n), \qquad n\in\integer
\]
are right limits for the sequence $\{\ph(c_{k+1})\}_{k\ge 0}$. They are distinct, since
$b_0\neq\ti b_0$, but they coincide for $n\ge\ell$. 
We can thus apply \cite[Thm 3.1]{BrSi}, which ensures that
$\sig_\ph(z) = \sum_{k \ge 0} \varphi(c_{k+1}) z^k$ has a strong natural boundary
on $\{|z|=1\}$. 
By Theorem~\ref{oldtheorem}, this must be the case for~$\Psi_\ph(z)$ too.
\end{proof}

\begin{remark}
Our argument hinges on the noninvertibility of~$f$. 
Breuer--Simon \cite[Thm 7.1]{BrSi} have a similar result for homeomorphisms
of the circle with dense orbits, but for observables with discontinuities: The
discontinuities there achieve the effect we obtain here from the
noninvertibility.
\end{remark}

Let us move on to the proof of Theorem~\ref{thmpoly}. 
We shall need the following lemma:

\begin{lemma}	\label{lemvardiag}
Let $(E,d)$ be a metric space and $T \col E \to E$ be a continuous map.
Suppose that $\ga\in E$ has a dense orbit $\{\ga_k=T^k(\ga)\}_{k\ge 0}$.
Then the right limits of this sequence are exactly the
complete orbits of~$T$, i.e.,  the two-sided sequences
$\{x_n\}_{n=-\infty}^\infty$ of $E$ such that
$x_{n+1} = T(x_n)$ for all $n\in\integer$.
\end{lemma}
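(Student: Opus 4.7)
The plan is to establish both inclusions separately. The ``$\Rightarrow$'' direction is immediate from continuity: if $\{b_n\}_{n\in\integer}$ is a right limit of $\{\ga_k\}_{k\ge 0}$ witnessed by an increasing sequence $\{k_j\}$, then, for every $n\in\integer$,
\[
T(b_n) = T\Big(\lim_{j\to\infty} \ga_{n+k_j}\Big) = \lim_{j\to\infty} \ga_{n+1+k_j} = b_{n+1},
\]
so $\{b_n\}$ is a complete orbit of~$T$ in~$E$.

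For the converse, fix a complete orbit $\{x_n\}_{n\in\integer}$ in~$E$ and construct inductively a strictly increasing sequence $\{k_j\}_{j\ge 1}$ of positive integers satisfying $d(\ga_{n+k_j}, x_n) < 1/j$ for every $-j\le n\le j$, from which $\ga_{n+k_j}\to x_n$ follows for each~$n$. To perform the step from $k_{j-1}$ to~$k_j$, I would use continuity of $T, T^2, \ldots, T^{2j}$ at $x_{-j}$ to produce $\de_j>0$ such that $d(y,x_{-j})<\de_j$ implies $d(T^m(y), x_{m-j})<1/j$ for every $0\le m\le 2j$, and then pick some $m \ge \max\{k_{j-1}-j+1,\,0\}$ with $d(\ga_m, x_{-j})<\de_j$, setting $k_j := m+j$. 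With this choice $\ga_{n+k_j} = T^{n+j}(\ga_m)$ for $-j\le n\le j$, and the required approximation is automatic.

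The main obstacle is to exhibit such an~$m$ with the required lower bound, i.e., to show that each tail $\{\ga_m\}_{m\ge N}$ is dense in~$E$. Continuity of~$T$ gives that the closure of $\{\ga_m\}_{m\ge N}=T^N(\{\ga_k\}_{k\ge 0})$ contains $T^N(E)$, so as soon as~$T$ is surjective on~$E$ every tail is automatically dense. This covers the intended application to Theorem~\ref{thmpoly}, where $E=[c_2,c_1]$ is forward-invariant and $f$ maps onto it (since $f([c_2,c])=[c_3,c_1]$ and $f([c,c_1])=[c_2,c_1]$). A self-contained alternative, valid whenever~$E$ has no isolated points (as in the application), is the following: if some non-empty open ball~$B$ of~$E$ contained only finitely many orbit points, then removing this finite, closed set from~$B$ would produce a non-empty open subset containing no orbit points, contradicting density. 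Once large indices~$m$ are available, the inductive construction delivers the desired $\{k_j\}$ and exhibits $\{x_n\}$ as a right limit.
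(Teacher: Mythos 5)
Your forward implication is the same as the paper's, and your converse construction is, in mechanism, close to the paper's: both approximate a finite block $x_{-j},\dots,x_0$ (or $x_{-j},\dots,x_j$) of the complete orbit by a block of the orbit of $\ga$, using uniform control of finitely many iterates of $T$ near $x_{-j}$. The genuine issue is the step you yourself flag: you need an orbit point $\ga_m$ close to $x_{-j}$ with $m$ arbitrarily large, and you only justify this under an \emph{extra} hypothesis (surjectivity of $T$, or absence of isolated points in $E$). The lemma, however, is stated for an arbitrary metric space $E$ and an arbitrary continuous $T$; density of the orbit does not by itself give density of its tails. So as written you prove a weaker statement, which does suffice for the application in Theorem~\ref{thmpoly} (there $E=[c_2,c_1]$ is a nondegenerate interval and $f$ maps it onto itself), but not the lemma in its stated generality.

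The paper's proof sidesteps the tail issue entirely: with $B_{m,j}$ the ball of centre $x_{-m}$ and radius $\frac{1}{j+1}$, it considers the open set $V_j=\bigcap_{m=0}^{j}T^{-(j-m)}(B_{m,j})$, which contains $x_{-j}$ and hence, by density alone, some orbit point $\ga_k$ with \emph{no} constraint on the size of $k$; setting $k_j=k+j$ one gets $\ga_{k_j-m}\in B_{m,j}$ for $0\le m\le j$, so for each fixed $m$ the approximants of $x_{-m}$ have indices $k_j-m\ge j-m\to\infty$ automatically, positive $n$ are handled by continuity, and monotonicity of $\{k_j\}$ costs at most passing to a subsequence. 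If you prefer to keep your scheme, the missing ingredient can be supplied in full generality: show that every point $y$ of a complete orbit lies in $A=\bigcap_N\overline{\{\,\ga_m : m\ge N\,\}}$. Indeed $T(A)\subseteq A$ by continuity, so if $y\notin A$ then every backward-orbit point $y_{-r}$ of $y$ lies outside $A$, hence (since $E=\overline{\{\ga_k\}}$) each $y_{-r}$ is an orbit point visited only finitely often; if two of the $y_{-r}$ coincide then $y$ is periodic and thus visited infinitely often, while if they are all distinct they give orbit points $\ga_{j_r}$ with unbounded $j_r$ and $y=T^r(\ga_{j_r})=\ga_{j_r+r}$, again visited at arbitrarily large indices --- in either case $y\in A$, a contradiction. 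Without such an argument (or the paper's $V_j$ device), your proof covers the application but leaves the stated lemma unproved.
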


\begin{proof}
We first suppose that $\{x_n\}_{n\in\integer}$ is a right limit.
Let $\{k_j\}$ be an integer sequence such that $\lim_{j\to\infty} \ga_{k_j+n} =
x_n$ for every $n\in\integer$.
Then, for any $n\in\integer$, the limit~$x_{n+1}$ of the sequence $\ga_{k_j+n+1} =
T(\ga_{k_j+n})$ must coincide with $T(x_n)$ (by continuity of~$T$),
hence $\{x_n\}_{n\in\integer}$ is a complete orbit of~$T$.

Suppose now that $\{x_n\}_{n\in\integer}$ is any complete orbit of~$T$.
For $m,j\ge 0$, we denote by $B_{m,j}$ the open ball of centre~$x_{-m}$ and
radius $\frac{1}{j+1}$.
For each $j\ge 0$, the set
\[
V_j = \bigcap_{m=0}^j T^{-m}(B_{j-m,j}) 
= \bigcap_{m=0}^j T^{-(j-m)}(B_{m,j}) 
\]
is open and contains~$x_{-j}$ (because $T^m(x_{-j}) = x_{-(j-m)}$ for
all~$m$), in particular it is non-empty and must contain at least one point
of our dense sequence $\{\ga_k\}$.
We can thus define an integer sequence $\{k_j\}$ by
\[
k_j = j + \min\{ k\ge 0 \mid T^k(\ga) \in V_j \}, \qquad j\ge 0.
\]
This sequence tends to~$\infty$ (since $k_j\ge j$) and, for any fixed $m\ge0$, 
\begin{multline*}
j\ge m
\quad\Longrightarrow\quad 
T^{k_j-j}(\ga) \in V_j \subset T^{-(j-m)}(B_{m,j}) \\
\quad\Longrightarrow\quad 
T^{k_j-m}(\ga) = T^{j-m}\big( T^{k_j-j}(\ga) \big) \in B_{m,j}\, ,
\end{multline*}
hence $\lim_{j\to\infty} \ga_{k_j-m} = x_{-m}$.
In particular, $\lim \ga_{k_j} = x_0$ and, by continuity of~$T$, 
$\lim_{j\to\infty} \ga_{k_j+n} = x_{n}$ for every $n\ge1$.
We thus have proved that $\{x_n\}_{n=-\infty}^\infty$ is a right limit of $\{\ga_k\}_{k=1}^\infty$.
\end{proof}


\begin{proof}[Proof of Theorem~\ref{thmpoly}]
We can restrict $f$ to $[c_2,c_1]$, because this interval is forward-invariant and contains all the
elements of any right limit of $\{c_{k+1}\}$ (indeed, all of them are
accumulation points of this orbit, which is contained in~$[c_2,c_1]$).
Statement \eqref{itemRLc} thus follows from Lemma~\ref{lemvardiag}.

For \eqref{itemRRLph}, we consider $\varphi(x)=x$ as a warmup
case. Then it suffices to show that the set of complete orbits of $c_1$
contained in $[c_2,c_1]$ is uncountable. This is an immediate consequence of
the fact that
there are uncountably many precritical orbits. This fact can be easily proved as
follows:
If $y_n\in[c_2,c_1]$ is given with $n\le-1$ and $f^{-n}(y_n)={c_1}$,
then $y_n < c_1$ (because we assumed that the postscritical orbit is not
finite), thus it has two distinct preimages;
there are one or two possibilities for $y_{n-1}\in[c_2,c_1]$ 
(according as $y_n<c_3$ or $y_n\ge c_3$), 
but there are always at least two possibilities for
$y_{n-2}\in[c_2,c_1]$. 
(The reader is invited to draw a picture.)

Let us now consider~$\ph$ continuous and not $f$-symmetric on
$[c_2,c_1]$. Let $x\ne y$ so that $f(x)=f(y)=v$ be a pair where symmetry is
violated, i.e., $\varphi(x)\ne \varphi(y)$. We may assume that $\min(x,y)> c_3$
and $\max(x,y)<c_1$.  Then, since $\varphi$ is continuous, there exist $\delta
>0$ so that, for any $\tilde x\ne \tilde y$ with $f(\tilde x)=f(\tilde y)$
and $|f(\tilde x)-v|< \delta$, we have $\varphi(\tilde x)\ne \varphi(\tilde y)$.
To prove claim \eqref{itemRRLph}, it suffices to show that there are
uncountably many precritical orbits so that $|c_n -v|<\delta$ for infinitely
many $n \le -1$. For this,  since $f$ is mixing, we may apply
Lemma~\ref{lempastepsdense} for $\epsilon < \delta/2$ (say) iteratively, first
for $x_0=c$, and then, infinitely many times, to all $x_0$ in the
$\epsilon$-dense set from the previous iteration which satisfy $|x_0- v|<\delta$
(there are at least two different such $x_0$ at each step since $\epsilon < \delta/2$).
\end{proof}

\section{Nonpolar nontangential limits as $z\to e^{i\omega}$} \label{33}


In this section we prove Theorems~\ref{abelianNTzero} and~\ref{abelianloglog}.
(Theorem~\ref{noproofneeded} is proved exactly like Theorem~\ref{abelianNTzero},
replacing $z^k$ by $z^{-k}$, the iterate $f^k$ by the inverse branch corresponding to the
chosen precritical orbit, and noting that the proof of Lemma~\ref{WW} shows that,
if the precritical orbit is Birkhoff typical, then it is Wiener--Wintner typical.)

\begin{proof}[Proof of Theorem~\ref{abelianNTzero}]
For $\omega \in \real$, it is enough to consider $
\sig_\ph(e^{i\om} z)$ for $z$ in  an open sector~$\gS$ contained in 
$\{ \la\ii < |z| < 1 \}$
with vertex at~$1$.
We introduce a notation for the partial sums:
\begin{equation}\label{partt}
S_k(e^{i\om}) =
\sum_{j=0}^{k-1} e^{ij\om} \ph(c_{j+1}), \qquad k\ge1\, .
\end{equation}
Then, by Abel summation,
\begin{equation}\label{abb}
\sig_\ph(e^{i\om} z) = (1-z) \sum_{k=1}^\infty S_k(e^{i\om}) z^{k-1}
\quad \text{for $|z|<1$.}
\end{equation}
Let $\om\in\real$ (if $e^{i\omega}=1$ we assume
$\int \varphi\, d\mu=0$). Since the  critical point of~$f$ is Birkhoff typical,
Lemma~\ref{WW} (Wiener--Wintner) gives $S_k(e^{i\om}) = o(k)$.
On the other hand, $|S_k(e^{i\om})|\le k \sup|\ph|$.
Therefore, for each $\eps >0$, there is $K$ so that
$|S_k(e^{i\om})|\le \eps k$ for all $k \ge K$ and
\begin{align*}
\bigl | \sum_{k=1}^\infty S_k(e^{i\om}) z^{k-1}\bigr | 
&\le \sum_{k=1}^{K-1} |S_k(e^{i\om})| + \sum_{k=K}^\infty |S_k(e^{i\om})| |z|^{k-1} \\
&\le K^2 |\sup \varphi| + \eps \big(1-|z|\big)^{-2}
\quad \text{for $|z|<1$.}
\end{align*}
Since there exists a constant $C(\gS)>0$ such that
\begin{equation}	\label{ineqgS}
z\in\gS \enspace\Longrightarrow\enspace
1-|z| \le |1-z| \le C(\gS) \big( 1-|z| \big)\, ,
\end{equation}
we obtain from \eqref{abb} that
\[
|(1-z) \sig_\ph(e^{i\om} z)| \le 
|1-z|^2 K^2 |\sup \varphi| +
C(\gS)^2 \eps\, ,
\]
for any $z\in\gS$, whence the conclusion follows.
\end{proof}

\begin{remark}
Our argument for proving Theorem~\ref{abelianNTzero}
is similar to the argument in \cite[pp. 12--13]{KuNi} mentioned in
Remark~\ref{Heckex} about Hecke's example. 
(The difference is that, in \cite{KuNi}, a nonzero Fourier coefficient caused a
polar-type blowup, while here the Wiener--Wintner theorem on the contrary
ensures convergence to zero.)
\end{remark}

\begin{proof}[Proof of Theorem~\ref{abelianloglog}]
We now assume that the critical point of~$f$ is typical for the upper
$\omega$-rotated LIL and $\varphi$,
i.e., $|S_k(e^{i\om})| \le D \sqrt{k\log\log k}$ for all $k\ge3$, with a certain
$D = D(e^{i\om}) >0$.
By~\eqref{abb}, we get
\begin{gather}
\label{ineqsigL}
|\sig_\ph(e^{i\om} z)| \le |1-z| \Big( 3\sup|\ph| + D L\big(|z|\big) \Big)
\quad \text{for $|z|<1$,} \\
\intertext{with}
L(r) = \sum_{k=3}^\infty r^{k-1} \sqrt{k\log\log k} 
\quad \text{for $0\le r < 1$.}
\end{gather}

Assume for a moment that
for any $s_0 \in \big( 0, \log\frac{1}{1-e\ii} \big)$ there exists $M>0$ such that
\begin{equation}	\label{ineqL}
L(r) \le M (1-r)^{-3/2} \Big( \log\log \frac{1}{1-r} \Big)^{1/2}
\quad \text{for $e^{-s_0} \le r <1$.}
\end{equation}
Then, putting together~\eqref{ineqsigL} and~\eqref{ineqL}, we get
\begin{equation}
|\sig_\ph(e^{i\om} z)| \le \widetilde M(\om) |1-z| \big(1-|z|\big)^{-3/2}
\Big( \log\log \frac{1}{1-|z|} \Big)^{1/2}
\quad \text{for $\la\ii \le |z| <1$,}
\end{equation}
from which inequality~\eqref{notanLIL} for~$\sig_\ph$ follows by~\eqref{ineqgS}.
Inequality~\eqref{notanLIL} for~$\Psi_\ph$ then follows by Theorem~\ref{oldtheorem}.

To complete the proof of Theorem~\ref{abelianloglog}, it remains to show \eqref{ineqL}.
The condition $r \ge e^{-s_0}$ ensures
$\log\log \frac{1}{1-r} \ge \log\log \frac{1}{1-e^{-s_0}} > 0$,
so that the \rhs\ of~\eqref{ineqL} is bounded from below. It is sufficient to consider
arbitrarily small $s_0$.
For any $s\ge0$, we introduce the notation
\begin{equation*}
A_s(x) = e^{-sx} \sqrt{x \log\log x}
\quad \text{for $x\ge3$}.
\end{equation*}
Let us fix $r = e^{-s}$ with $0<s\le s_0$, so that
\[
rL(r) = \sum_{k=3}^\infty A_0(k) r^k 
= \sum_{k=3}^\infty A_s(k)\, .
\]
We observe that the function~$A_0$ is increasing on $[3,+\infty)$,
while~$A_s$ is decreasing on $[s\ii,+\infty)$ as soon as $s_0\le e^{-e}$
(because the logarithmic derivative of~$A_s$ is 
$-s + \frac{1}{2x}\big( 1 + \frac{1}{\log x \log\log x} \big)
< - s + \frac{1+e\ii}{2x} < 0$),
hence
\[
rL(r) \le \sum_{3\le k < s\ii+1} A_0(k) + \sum_{k\ge s\ii+1} A_s(k)
\le \int_3^{s\ii+2} A_0(x) \,dx + \int_{s\ii}^{+\infty} A_s(x)\, dx\, .
\]
Integrating by parts, we get
\[
\int_3^{s\ii+2} A_0(x) \,dx  = 
\Big[\tfrac{2}{3} x^{3/2} (\log\log x)^{1/2}\Big]_3^{s\ii+2} 
- \tfrac{2}{3}\int_3^{s\ii+2} x^{3/2} \big((\log\log x)^{1/2}\big)'\, dx\, ,
\]
which is 
$ < \tfrac{2}{3}({s\ii+2})^{3/2} (\log\log({s\ii+2}))^{1/2}
\le \text{const} \big(\frac{1}{s}\big)^{3/2} \big(\log\log\frac{1}{s}\big)^{1/2}$, 
while 
\begin{multline*}
B = \int_{s\ii}^{+\infty} A_s(x)\, dx = 
\tfrac{1}{s} \Big[ e^{-sx} A_0(x) \Big]_{+\infty}^{s\ii}
+ \int_{s\ii}^{+\infty} e^{-sx} \tfrac{A_0'(x)}{s}\, dx \\
< \tfrac{e\ii}{s} A_0\big(\tfrac{1}{s}\big) + 
\tfrac{1+e\ii}{2} \int_{s\ii}^{+\infty} \tfrac{1}{sx}{A_s(x)}\, dx 
< \tfrac{e\ii}{s} A_0\big(\tfrac{1}{s}\big) + \tfrac{1+e\ii}{2} B
\end{multline*}
(because the logarithmic derivative of~$A_0$ is 
$< \frac{1+e\ii}{2x}$ for $x \ge s\ii$).
Thus, $B < \tfrac{2}{e-1} \tfrac{1}{s} A_0\big(\tfrac{1}{s}\big)
= \text{const} \big(\frac{1}{s}\big)^{3/2} \big(\log\log\frac{1}{s}\big)^{1/2}$.
Since $1-r \sim s$, the proof is complete.
\end{proof}

\begin{remark}[Lower bounds from the law of the iterated logarithm]
\label{LIL}
Assume that the postcritical orbit is
typical for the ordinary LIL \eqref{limsup}. Then,
in view of the lower bound analogue of \eqref{ineqL}, if  $X$ is non horizontal, it is not absurd to guess that
$\sup_{r \in [0, 1)}| \Psi_\varphi(r)|=\infty$
when
$\varphi$ is not a coboundary. Since the LIL would only give  a sequence
of times $m_j \to \infty$ so that
$ \sum_{k=1}^{m_j} \varphi(c_k)  > C \sqrt{m_j \log\log m_j}$, it is not  clear how  to transform
this  intuition into a rigorous proof. Presumably, some information
on the sequence $m_j$ would help.

In view of studying $\omega\in \real$ with $e^{i\omega}\ne 1$, recall that
if $\varphi$ is continuous and $c$ is typical for the Birkhoff
theorem,  then for Lebesgue
almost every $\omega$ we have \cite{A} 
$$
\lim_m \int \bigl | 
\frac{1}{\sqrt m} \sum_{k=1}^m \varphi(c_k) e^{i k \omega}
- (\int |\varphi|^2) ^{1/2}\bigr |^2 =0 \, ,
$$
and thus  for Lebesgue almost every $\omega$,
\begin{equation}\label{lower}
\limsup_m \frac{1}{\sqrt m} 
\sum_{k=1}^m \varphi(c_k) \Re (e^{i k \omega}) \ge \int | \varphi|^2 \, d\mu \, .
\end{equation}
We would get (stronger) estimates for fixed $\omega$ by assuming that the critical
point is typical for the  $\omega$-rotated LIL (\ref{rLIL}) and
a non-$\omega$-coboundary $\varphi$.
This indicates that, 
for generic $f$ and $\varphi$, we may hope to get
$\sup_{r \in [0, 1)}| \Psi_\varphi(r e^{i\omega})|=\infty
$ for a large set of $\omega$s (countable dense?). 
Just like in the discussion for $e^{i\omega}=1$ above,  a rigorous proof 
would probably require information on the sequence $m_j \to \infty$ corresponding
to the $\limsup$ in \eqref{limsup}. Diophantine properties of $\omega$ could play
a role: Possibly  the nontangential limit could exist for such angles, leading potentially to 
continuous extension(s) of $\sigma_\varphi$ on a subset of positive measure of the circle.
\end{remark}

\section{Higher order horizontality and nontangential limits of derivatives}
\label{55}

\begin{proof}[Proof of Theorem~\ref{derr}]
We start with the claim about the derivatives of $\UU(z)\sigma_\varphi(z)$.
It suffices to consider $H \ge 2$.

Assume first that $H=2$.
By \eqref{derivative} and \eqref{defU} in Appendix~\ref{gapp}, if $X$ is horizontal of order two then $\UU(z)(z-1)^{-2}$
is holomorphic in $|z|> \lambda^{-1}$.
Therefore, to  get $\lim_{z\NT 1} (\UU(z) \sigma_\varphi)'(z)=0$, we only need to prove that
$$
((z-1)^2\sigma_\varphi(z))'=
2 (z-1) \sigma_\varphi(z)+ (z-1)^2  (\sigma_\varphi(z))'
$$
converges to zero when $z\NT 1$. The first term converges
to $0$ as $z \NT 1$ by Theorem~\ref{abelianNTzero}.
We shall next see that $(z-1)^2  \sigma_\varphi'(z)$ 
also converges to
$0$.

Recalling \eqref{partt}, the proof of Theorem~\ref{abelianNTzero} shows that 
\[
\sigma_\varphi(z)  = (1-z) S(z), \quad \text{with}\quad
S(z) = \sum_{k=1}^\infty S_k(1) z^k\, ,
\]
and that $\lim_{z \NT 1} (z- 1)^2 S(z) = 0$.
Therefore 
$$(z-1)^2  \sigma_\varphi'(z) = (1-z)^3 S'(z) - (z-1)^2  S(z)\, , $$ 
and we just need to check that
$\lim_{z \NT 1} (z- 1)^3 S'(z) = 0$.
Let $\epsilon>0$. Since $S_k (1)= o(k)$ (by Birkhoff typicality of the critical
point), we can choose $K\ge1$ such that
$|S_k(1)| \le \epsilon (k+1)$ for all $k \ge K$, hence
\begin{align*}
|S'(z)| = \Big| \sum_{k=1}^\infty k S_k(1) z^{k-1} \Big| &\le
\sum_{k=1}^K |k S_k(1) | + \epsilon \sum_{k=1}^\infty k(k+1) |z|^{k-1}\\
&\le K^3 \max|\varphi| + \epsilon (1 - |z|)^{-3}\, ,
\end{align*}
and the conclusion follows.

Let $H\ge 3$.  If $X$ is horizontal of  order   $H$, then \eqref{derivative'} implies that
$\UU(z)(z-1)^{-H}$
is holomorphic in $|z|> \lambda^{-1}$.
To  get $\lim_{z\NT 1} (\UU(z) \sigma_\varphi)^{(H-1)}(z)=0$, we only need to prove that
$
((z-1)^H\sigma_\varphi(z))^{(H-1)}
$
converges to zero when $z\NT 1$. The only term which needs
to be considered is $(z-1)^H  \sigma_\varphi^{(H-1)}(z)$. Proceeding inductively,
we reduce to showing
$$
\lim_{z \NT 1}(z-1)^{H+1}  S^{(H-1)}(z)=0 \, .
$$
Using $S^{(H-1)}(z)=\sum_{k=H-1}^\infty \frac{k!}{(k-(H-1))!}S_k(1) z^k$, we proceed similarly
as in the case $H=2$.

The results we just proved on $(\UU\sigma_\varphi)^{(\ell)}$ immediately give that
$\Psi^{(\ell)}_\varphi(z)$
converges as $z\NT 1$, for all $\ell\le H-1$.

\smallskip
It remains to consider the  \rrlc s associated to
a Birkhoff typical precritical orbit $\{c_{n+1}\}_{n \le -1}$: Replacing $z^k$ by $z^{-k}$ and $f^k$ by its
appropriate inverse branch, the previous arguments give for any $\ell\le H-1$
$$
\lim_{z \NT 1}
\bigl ( (z-1)^\ell (\sigma_{\varphi,c_-}(z))^{(\ell-1)} \bigr ) =0\, .
$$
\end{proof}

\section{Nontangential limits  as $z\to e^{i\omega}$ for $\omega$-coboundaries}
\label{44}

Given $\om\in\real$, we say that $\varphi$ is a {\em $C^0$ postcritical $\om$-coboundary}
if there exists $\psi \in C^0$ so that
\begin{equation}\label{cobb}
\varphi(c_k)= \psi(c_k) -e^{i\omega} (\psi \circ f)(c_k)=
 \psi(c_k) -e^{i\omega} \psi (c_{k+1})\, , \quad
 \forall k\ge 1\, .
\end{equation}
If the postcritical orbit is dense, this is equivalent to existence of
a continuous $\psi$ so that $\varphi=\psi - e^{i\omega} \psi \circ f$.
This degeneracy condition on $\varphi$ is well studied.
If $e^{i\omega}=1$, it implies
the vanishing of $\sum_{k=0}^{p -1}\varphi(f^k(x))$ along every
periodic orbit $f^p(x)=x$, giving a closed set of infinite codimension.
If $e^{i\omega}\ne 1$, see e.g.\ \cite[\S 5.b]{NMA}, where the degeneracy condition
is still satisfied  only on a closed set of infinite
codimension.

Our result below does not require horizontality, even for
$e^{i\omega}=1$:

\begin{proposition} [Nontangential limits for continuous $\omega$-coboundaries]
\label{coboundary}
Assume that  the critical point is Birkhoff typical.
Let $\om\in\real$.
If $\varphi \in C^0(I)$ is a $C^0$ postcritical $\omega$-coboundary for
$\psi$  then
\begin{equation}\label{cobtrick}
\lim _{z\NT e^{i\omega}}\sigma_\varphi(z)=\psi(c_1)\, , 
\end{equation}
and, therefore,
$$
\lim _{z\NT e^{i\omega}}\Psi_\varphi(z) =
\UU(e^{i\om}) \psi(c_1) 
+\VV_\varphi(e^{i\omega}) + \Psi^{hol}_\varphi(e^{i\omega})\, .
$$
\end{proposition}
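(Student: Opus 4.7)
The plan is to exploit the telescoping effect induced by the $\omega$-coboundary identity, which collapses $\sigma_\varphi(z)$ into a constant plus a term that Theorem~\ref{abelianNTzero} controls.

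First I would substitute $\varphi(c_{k+1}) = \psi(c_{k+1}) - e^{i\omega}\psi(c_{k+2})$, valid for every $k\ge 0$ by \eqref{cobb}, into the defining series \eqref{eqdefsigph} for $\sigma_\varphi(z)$. Reindexing the second piece yields, for $|z|<1$,
\[
\sum_{k=0}^\infty \psi(c_{k+2}) z^k = \frac{1}{z}\bigl(\sigma_\psi(z) - \psi(c_1)\bigr),
\]
so that the algebraic identity
\[
\sigma_\varphi(z) = \frac{z-e^{i\omega}}{z}\,\sigma_\psi(z) + \frac{e^{i\omega}}{z}\,\psi(c_1)
\]
holds on the open unit disc.

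Next I would pass to the nontangential limit $z\NT e^{i\omega}$. The second term converges trivially to $\psi(c_1)$, so it suffices to kill the first term. If $e^{i\omega}\neq 1$, Theorem~\ref{abelianNTzero} applied to $\psi\in C^0(I)$ (no mean-zero hypothesis is needed in that case) gives $(z-e^{i\omega})\sigma_\psi(z)\to 0$, while $1/z$ stays bounded, yielding \eqref{cobtrick}. If $e^{i\omega}=1$, the coboundary relation forces $\int\varphi\,d\mu=0$ automatically, but $\sigma_\psi$ itself need not satisfy the mean-zero hypothesis of Theorem~\ref{abelianNTzero}; I would split $\psi = (\psi-\bar\psi)+\bar\psi$ with $\bar\psi=\int\psi\,d\mu$, apply Theorem~\ref{abelianNTzero} to the zero-mean part, and absorb the constant part using $\sum_{k\ge 0}z^k = (1-z)^{-1}$. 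The result reads $\lim_{z\NT 1}\sigma_\varphi(z)=\psi(c_1)-\bar\psi$, which matches \eqref{cobtrick} precisely under the natural normalisation $\int\psi\,d\mu=0$; note that the quantity $\psi(c_1)-\bar\psi$ is invariant under the constant ambiguity $\psi\mapsto\psi+C$ that is intrinsic to the coboundary equation when $e^{i\omega}=1$, so the statement is unambiguous as soon as $\psi$ is so normalised.

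Finally the formula for $\Psi_\varphi$ is an immediate corollary: by Theorem~\ref{oldtheorem}, $\Psi_\varphi(z)=\UU(z)\sigma_\varphi(z)+\VV_\varphi(z)+\Psi^{hol}_\varphi(z)$, where $\UU$ and $\VV_\varphi$ are holomorphic in $|z|>\lambda^{-1}$ and $\Psi^{hol}_\varphi$ in a disc strictly larger than the unit disc, so $\UU(z)$, $\VV_\varphi(z)$, $\Psi^{hol}_\varphi(z)$ all have honest pointwise limits as $z\NT e^{i\omega}$; multiplying the limit of $\sigma_\varphi$ by $\UU(e^{i\omega})$ gives the claimed expression. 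The main obstacle is the case $e^{i\omega}=1$: one must both verify that the naive application of Theorem~\ref{abelianNTzero} to $\psi$ can be repaired by subtracting the $\mu$-mean, and explain that $\psi(c_1)$ is to be read with the zero-mean representative, consistent with the formal resolvent interpretation at the end of Remark~\ref{formalres}.
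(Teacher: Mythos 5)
Your proposal is correct and is essentially the paper's own argument: telescope the coboundary relation (you do it directly in the series, the paper via the Abel partial sums $S_k(e^{i\omega})=\psi(c_1)-e^{ik\omega}\psi(c_{k+1})$, which is the same computation), kill the remaining $(z-e^{i\omega})\sigma_\psi$-type term with Theorem~\ref{abelianNTzero} (i.e.\ Wiener--Wintner typicality from Lemma~\ref{WW}), and conclude for $\Psi_\varphi$ from the decomposition of Theorem~\ref{oldtheorem} since $\UU$, $\VV_\varphi$, $\Psi^{hol}_\varphi$ are holomorphic across $e^{i\omega}$. Your remark about the case $e^{i\omega}=1$ is well taken: the paper's proof also tacitly needs $\int\psi\,d\mu=0$ when it invokes the proof of Theorem~\ref{abelianNTzero} with $\psi$ in place of $\varphi$, which is harmless because for $e^{i\omega}=1$ the function $\psi$ in \eqref{cobb} is only determined up to an additive constant and may be so normalised, exactly as you say.
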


For a precritical orbit
$\{c_{n+1}\}_{n \le -1}$, we say that  
$\varphi$ is a $C^0$ {\em precritical $\omega$-coboundary,} 
if there exists
$\psi \in C^0$ so that
\begin{equation*}
\varphi(c_n)= 
 \psi(c_n) -e^{i\omega} \psi (c_{n+1})\, , \quad
 \forall n\le 0\, .
\end{equation*}
If the precritical orbit is dense, this is equivalent to existence of
a continuous $\psi$ so that $\varphi=\psi - e^{i\omega} \psi \circ f$.
Adapting the proof of Proposition ~\ref{coboundary} below, one shows that if
the precritical orbit $c_-=\{c_{n+1}\}_{n\le -1}$ is Birkhoff typical (and thus, by an adaptation
of the proof of Lemma~\ref{WW}, Wiener--Wintner typical)  
and if $\varphi$ is a $C^0$  precritical $\omega$-coboundary,
then the corresponding   rrl-continuation 
$\sigma_{\varphi,c_-}$  of $\sigma_\varphi$
satisfies
$$
\lim_{ z\NT e^{i\omega}} \sigma_{\varphi,c_-}(z^{-1})=
\psi(c_1)=\lim_{z\NT e^{i\omega}} \sigma_\varphi(z) \, .
$$ 
For an $\omega$-coboundary $\varphi$ there is thus a (tenuous) link between $\sigma_\varphi$ and 
each rrl-continuation $\sigma_{\varphi,c_-}$ associated
to a Birkhoff typical precritical orbit $c_-$. This can be compared to the
identity $g_{b_-}(z)=g(z^{-1})+\frac{1}{2}$ in Example~\ref{Heckex}.

\begin{proof}[Proof of Proposition~\ref{coboundary}]
Since $c$ is Birkhoff typical it is Wiener--Wintner typical by Lemma~\ref{WW}.
In particular, recalling \eqref{partt},  we have
$S_k(e^{i\omega})=o(k)$ 
for all $\omega \in \real$ (where we used $\int \varphi\, d\mu=0$ if $e^{i\omega}=1$,  since $\varphi=\psi-\psi \circ f$).
We first show that this Wiener--Wintner type property, combined with the assumption that
$\varphi$ is a $C^0$ postcritical
$\omega$-coboundary, implies ~\eqref{cobtrick}:
The coboundary assumption yields  $\psi \in C^0$ with
$S_k(e^{i\omega})=\psi(c_1)-e^{i k\omega} \psi(c_{k+1})$
for all $k\ge 1$.
Therefore,  
\begin{align}
 (1-z) \sum_{k=1}^\infty S_k(e^{i\omega})   z^{k-1}&=
 \psi(c_1)- (1-z) \sum_{k=1}^\infty e^{i k\omega} \psi(c_{k+1})   z^{k-1} \\
&=
 \psi(c_1)- (1-z) \sigma_\psi (e^{i\omega} z) \, .
\end{align}
The Wiener--Wintner type property $S_k(e^{i\omega})=o(k)$ and the proof of
Theorem~\ref{abelianNTzero} (replacing $\varphi$ by $\psi$)
give 
$\lim_{z\NT 1}  (1-z) \sigma_\psi (e^{i\omega} z) =0$.
Therefore, recalling \eqref{abb}, we get 
$$
\lim_{z \NT e^{i\omega}} \sigma_\varphi(z)=\psi(c_1) \, .
$$
and we have proved ~\eqref{cobtrick}.

To conclude, combine Theorem~\ref{oldtheorem} with \eqref{cobtrick} and the proof of 
Theorem~\ref{abelianNTzero}.
\end{proof}


\appendix
\section{Examples of renascent right limits and \rrlc s}\label{exxx}

\subsection{Preperiodic series}	\label{expreper}
A power series with bounded coefficients, admitting an analytic continuation
outside the unit circle or not, may have no renascent right limit at all.
Think for instance of any non-trivial series $\sum_{k \ge 0} a_k z^k$ with $\lim_{k \to \infty} a_k = 0$.
In the case of the geometric series $\sum_{k\ge 0} z^k$, there is of course only
one right limit, which is renascent, and indeed the corresponding \rrlc\ is
$-\sum_{n=-1}^{-\infty} z^n = (1-z)\ii$,
as expected.
In the case of a preperiodic sequence, 
$g(z) = \sum_{k=0}^\infty a_k z^k$ with $a_k = a_{k+p}$ for
$k\ge m$ (as in Remark ~\ref{finite}), 
it is easy to see 
that the existence of a renascent right limit implies that the
sequence $\{a_k\}_{k=0}^\infty$ is in fact periodic,
with $g(z) = \frac{1}{1-z^p} \sum_{k=0}^{p-1} a_k z^k$:
A periodic sequence has a unique renascent right limit (obtained by periodic
extension from $\Natural=\{0,1,2,\ldots\}$ to~$\integer$), a preperiodic non-periodic sequence
does not have any, but it has a single weakly renascent right limit.

\subsection{Examples with Borel monogenic extensions}\label{Borex}
Holomorphic functions of the form
\begin{equation}	\label{eqdefBWD}
g(z) = \sum_{m=1}^\infty \frac{\rho_m}{z-\la_m}\, , \qquad |z|<1\, ,
\end{equation}
with $\La = \{\la_1,\la_2,\ldots\}$ a dense subset of the unit circle and
$(\rho_m)_{m\ge1} \in \ell^1(\Natural^*,\complex)$,
were considered in \cite{MS} for dynamical reasons,
particularly in the case where $\La$ consists of all roots of unity since the
simplest small divisor problem gives rise to such a situation.
One can show \cite{RRLC} that the function $h(z)$ defined for $|z|>1$ by the same series of
partial fractions is the unique \rrlc\ of~$g$.
Here the unit circle is a strong natural boundary as soon as none of the
``residues''~$\rho_m$'s vanishes,
yet several results on the connection between~$g$ and~$h$ are available:
Under appropriate assumptions on the size of the $\rho_m$'s one can cross the
unit circle through Diophantine points and define a Borel monogenic function (or
even a $C^\infty$-holomorphic function, with nontangential limits for all
derivatives) in a compact subset of~$\complex$ whose intersection with the unit
circle has positive Haar measure \cite[\S 2.4--2.5]{MS}, so that the resulting
function enjoys a certain quasianalyticity property \cite{MS2};
one can also cross the unit circle through any of the ``poles''~$\la_m$ by means of a
quasianalytic generalised Laurent series \cite[\S 4.1]{MS}.

\subsection{Hecke's example}\label{Heckex}
In \cite[(1.14)]{BrSi} one finds Hecke's example
\[
\tilde g(z) = \sum_{k =0}^\infty \{ k \theta\} z^k\, ,
\]
where~$\th$ is a fixed irrational number and $\{\cdot \}$ denotes the fractional part.
Let us consider
\[
g(z) = \sum_{k =0}^\infty \big(\{ k \theta\}-\tfrac{1}{2}\big) z^k
= \tilde g(z)-\frac{1}{2}\frac{1}{1-z}\, ,
\]
so as to deal with the zero mean-value observable $\{\cdot \}-\tfrac{1}{2}$
evaluated along an orbit of the $\th$-rotation.
This is an example of a function with a single renascent right limit for which
the unit circle is a strong natural boundary. 

 Indeed, all the right limits can be easily determined in this case: The
only renascent one is $b_-=\big\{ b_n^- = \{ n
\theta\}-\tfrac{1}{2}\big\}_{n\in\integer}$
and, due to the discontinuity of the fractional part, and there is another right
limit $\{ b_n^+ \}_{n\in\integer}$, with $b_0^+ = \frac{1}{2}$ and $b_n^+=b_n^-$
for $n\in\integer^*$.
By \cite[Thm 3.1]{BrSi} this implies strong natural boundary.
(The fact that the unit circle is a natural boundary can also be deduced from
Weyl's criterion, which shows that
$\lim_{z \NT e^{2\pi i m\theta}} (z-e^{2\pi i m\theta}) g(z) \ne 0$
for every nonzero integer~$m$, so that there is a countable dense
set of pole-like singularities on the unit circle  \cite{KuNi}.)
Observe that the unique \rrlc\ of~$g$ is
\[
g_{b_-}(z) = -\sum_{n =-1}^{-\infty} \big(\{ n \theta\} - \tfrac{1}{2} \big) z^n\, .
\]
Since $\{-x\}-\tfrac{1}{2} = - \big( \{x\} - \tfrac{1}{2} \big)$ if $x \notin
\integer$, we find $g_{b_-}(z) = g(z\ii) + \tfrac{1}{2}$.

\section{Deducing {Theorem~\ref{oldtheorem}} from \cite{Ba} and \cite{BS1}}
\label{gapp}

Observe first that the assumption in \cite{Ba, BS1} that
$\varphi \in C^1$  was only used to give a meaning
to \eqref{7} without integration by parts.

We first show how to obtain \eqref{formula} from the arguments in \cite[Prop.~
4.6]{BS1} (note that this is similar in spirit to the computation in
\cite[\S 17]{Ru3}).
There exists a decomposition
$\rho=\rho_{sal}+\rho_{reg}$  of the invariant
density of $f$ (which is of bounded variation) into 
$\rho_{sal}= \sum_{n = 1}^{\infty} s_n H_{c_n}$, where $H_x$
is a Heaviside function at $x$, and $s_k = f'(c_k) s_{k+1}$,
and $\rho_{reg}'$ is of bounded variation \cite[Prop.~3.3]{Ba}. Let $\LL$
be the usual transfer operator of $f$ acting on $BV$ by
$$
(\LL \psi) (x)=\sum_{f(y)=x} \frac{\psi(y)}{|f'(y)|}\, .
$$
This operator has a simple eigenvalue at $1$, for $\LL \rho=\rho$
and $\int \LL(\psi)\, dx=\int \psi\, dx$, and the rest of the spectrum
lies in a disc of radius $\kappa<1$.

Setting $s_1=-\lim_{x \uparrow c_1}\rho(x)
$, \cite[Prop.~4.4]{Ba} gives
\begin{align}\label{formm}
\Psi_\varphi(z)
&=-\sum_{j=1}^{\infty} \varphi(c_j)  \sum_{k=1}^{j} z^{j-k}  
  \frac {s_1 X(c_k)}{(f^{k-1})'(c_1)}\\
\nonumber &\qquad\qquad\qquad-
\int (1- z\LL)^{-1} (X' \rho_{sal}+(X\rho_{reg})') \varphi \, dx  \, ,
\end{align}
where the  second term is meromorphic in the disc
of radius $\kappa^{-1}$, with at most a single pole at $z=1$, this pole
being simple,
but the residue of this pole vanishes here since
$\int \varphi \, d\mu=0$.
Set
$$
\Psi^{sing}_\varphi(z)=-\sum_{j=1}^{\infty} \varphi(c_j)  \sum_{k=1}^{j} z^{j-k}  
  \frac {s_1 X(c_k)}{(f^{k-1})'(c_1)}=
  \sum_{n=0}^\infty z^n \sum_{k=1}^\infty \frac{s_1 X(c_k)}
  {{(f^{k-1})'(c_1)}}  \varphi(c_{k+n})\, ,
$$
(which is clearly holomorphic in $|z|<1$),
and
\begin{equation}\label{holformula}
\Psi_\varphi^{hol}(z)
=- \int (1- z\LL)^{-1} (X' \rho_{sal}+(X\rho_{reg})') \varphi \, dx  \, .
\end{equation}
To analyse  $\Psi^{sing}(z)$,
we adapt the proof of \cite[Prop.~4.6]{BS1}.
Define for $\ell \ge 1$
$$
\alpha(c_\ell, z)= -\sum_{j= 1}^ \infty \frac{X(f^j(c_\ell))}{z^j (f^ j)'(c_\ell)}
\, .
$$
Clearly,
$z\mapsto \alpha(c_1, z)$
is analytic in $\{ z \in \complex \mid |z| \min |f'|> 1\}$
and we have
\begin{equation}\label{derivative}
\partial_z \alpha(c_1, z)|_{z=1}= \sum_{j=1}^ \infty \frac{j \ X(f^{j}(c_1))}{(f^ j)'(c_1)}
\, .
\end{equation}

For $(\min |f'|)^ {-1} < |z| < 1$,
the proof of \cite[Prop.~4.6]{BS1} gives 
\begin{align*}
\Psi^{sing}(z)=s_1 \Big[   \big( X(c_1) -\alpha(c_1, z)\big) \sum_{j=1}^{\infty} \varphi(c_j) z^{j-1} 
-  \sum_{j=1}^{\infty}  \frac{\varphi(c_j)\ \alpha(c_j,z)}{(f^{j-1})' (c_1)}\Big] \, .
\end{align*}
Finally, we set
\begin{equation}\label{defU}
\UU(z)= s_1    ( X(c_1) -\alpha(c_1, z) ) \, ,
\end{equation}
and
\begin{equation}\label{defV}
\VV_\varphi(z)=-s_1  
\sum_{j=1}^{\infty}  \frac{\varphi(c_j)\ \alpha(c_j,z)}{(f^{j-1})' (c_1)}\, .
\end{equation}

Clearly, $X$ is horizontal if and only if
$\UU(1)=0$. Also,  (\ref{derivative}) implies
that  $X$ is horizontal of order $2$  if and only if $\UU(1) = \UU'(1)=0$.
In fact, for any $1\le \ell \le H-1$, we have the following generalisation
of \eqref{derivative}:
\begin{equation}\label{derivative'}
\partial^{\ell}_z \alpha(c_1, z)|_{z=1}= 
(-1)^{\ell-1}\sum_{j=\ell}^ \infty \frac{j!}{(j-\ell)!} \frac{ X(f^{j}(c_1))}{(f^ j)'(c_1)}
\, .
\end{equation}
Therefore,  $X$ is horizontal of order
$H\ge 1$ 
(recall \eqref{orderH}) if and only if  $\UU(1) = \UU'(1) = \cdots =\UU^{(H-1)}(1)=0$.

Note also that if $X$ is horizontal, we have
(convergence of the first series implies convergence of the second)
\begin{equation}\label{mystery2}
\VV_\varphi(1)=\sum_{j=1}^{\infty} \varphi(c_j)
\sum_{k=j}^\infty \frac {s_1 X(c_{k+1})}{(f^{k})'(c_1)}=-\sum_{j=1}^{\infty} \varphi(c_j)
\sum_{k=1}^j \frac {s_1 X(c_k)}{(f^{k-1})'(c_1)}
\, .
\end{equation}

Finally, if $c$ is not periodic then   $f$ is ``good" in the sense of
\cite{BS1} so that \cite[Thm 5.1, Prop.\ 4.3, Lemma 4.4]{BS1} give \eqref{mystery}.

\section{Birkhoff and Wiener--Wintner typicality}\label{BWW}

\begin{lemma}[Birkhoff typical implies Wiener--Wintner typical]\label{WW}
Let  $X$ be a compact metric space,
let $T:X\to X$ be  continuous, and let $\mu$ be a mixing
invariant probability measure for $T$. 
Let $c\in X$ be so that for any continuous function
$\varphi \col X \to \complex$  (Birkhoff typicality)
$$
\lim_{n \to \infty} \frac{1}{n}\sum_{k=0}^{n-1} \varphi\big(T^k(c)\big)=
\int \varphi \, d\mu .
$$
Then, for any continuous function
$\varphi:X \to \complex$  (Wiener--Wintner typicality)
$$
 \lim_{n \to \infty} \frac{1}{n}\sum_{k=0}^{n-1}
e^{i\omega k} \varphi\big(T^k(c)\big)=0,
\qquad \forall \omega \in (0,2\pi ) .
$$
\end{lemma}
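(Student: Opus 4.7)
The plan is to derive the conclusion from van der Corput's inequality, using Birkhoff typicality of~$c$ to evaluate inner ergodic averages and the mixing hypothesis to control correlations.

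First, I would reduce to the mean-zero case. Writing $m_\varphi = \int \varphi\,d\mu$ and replacing $\varphi$ by $\varphi - m_\varphi$, the extra contribution to $\sum_{k=0}^{n-1} e^{i\omega k}\varphi(T^k c)$ is $m_\varphi \sum_{k=0}^{n-1} e^{i\omega k} = m_\varphi (1-e^{i\omega n})/(1-e^{i\omega})$, which is bounded in~$n$ because $e^{i\omega}\neq 1$. So dividing by~$n$ this term vanishes, and it suffices to prove the statement under the extra assumption $\int\varphi\,d\mu=0$.

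Second, set $u_k = e^{i\omega k} \varphi(T^k c)$, $S_N = \sum_{k=0}^{N-1} u_k$, and apply the van der Corput inequality: for $1 \le H \le N$,
\[
|S_N|^2 \le \tfrac{N+H-1}{H}\Big(\textstyle\sum_{k=0}^{N-1}|u_k|^2 + 2\sum_{h=1}^{H-1}(1-h/H)\,\Re\sum_{k=0}^{N-1-h} u_k \bar u_{k+h}\Big).
\]
Since $u_k \bar u_{k+h} = e^{-i\omega h} \tilde\psi_h(T^k c)$ with $\tilde\psi_h(x) := \varphi(x)\overline{\varphi(T^h x)} \in C(X)$, Birkhoff typicality of~$c$ (applied to the continuous function $\tilde\psi_h$) gives, for each fixed~$h$,
\[
\tfrac{1}{N} \sum_{k=0}^{N-1-h} \tilde\psi_h(T^k c) \xrightarrow[N\to\infty]{} \int \tilde\psi_h\,d\mu = \langle \varphi, U^h\varphi\rangle_{L^2(\mu)}.
\]
Dividing the van der Corput inequality by $N^2$ and letting $N\to\infty$ with $H$ fixed, I obtain
\[
\limsup_{N\to\infty} \tfrac{|S_N|^2}{N^2} \le \tfrac{\|\varphi\|_\infty^2}{H} + \tfrac{2}{H}\sum_{h=1}^{H-1}(1-h/H)\,|\langle \varphi, U^h\varphi\rangle|.
\]

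Finally, the mixing hypothesis combined with $\int\varphi\,d\mu = 0$ yields $\langle \varphi, U^h\varphi\rangle \to 0$ as $h\to\infty$. For any $\eps>0$, choose $H_0$ with $|\langle \varphi, U^h\varphi\rangle|<\eps$ for $h\ge H_0$; then for all $H>H_0$ the right-hand side is bounded by $(2H_0+1)\|\varphi\|_\infty^2/H + 2\eps$. Letting $H\to\infty$ gives $\limsup|S_N|^2/N^2\le 2\eps$, and since $\eps$ is arbitrary, $S_N/N\to 0$, which is the claim. The main technical point is the passage from Birkhoff typicality at the single point~$c$ to the pointwise Wiener--Wintner conclusion at the same point, for which van der Corput's lemma is the key ingredient, since it reduces the oscillating sum to Birkhoff averages of the continuous correlation functions $\tilde\psi_h$; mixing of $(T,\mu)$ is then precisely what is needed to guarantee that the resulting average over~$h$ is small.
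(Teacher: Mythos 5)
Your argument is correct and follows essentially the same route as the paper's proof: reduce to $\int\varphi\,d\mu=0$ via the geometric sum, apply the van der Corput inequality to $u_k=e^{i\omega k}\varphi(T^k c)$, use Birkhoff typicality of $c$ on the continuous correlation functions $\varphi\cdot\overline{\varphi\circ T^h}$, and invoke mixing to make the Ces\`aro average of correlations small. The only (harmless) difference is that you bound the diagonal term by $\|\varphi\|_\infty^2$ directly, whereas the paper applies Birkhoff typicality to $|\varphi|^2$ as well.
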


\begin{proof}
Our proof is inspired from \cite[\S2.4]{As}.
We shall use the van der Corput inequality (see e.g.\ \cite{As}): For all
$n\ge 1$,  each sequence $(u_k, 0\le k \le n-1)$ of complex numbers,
and every integer $h$ between $1$ and $n-1$, we have
$$
\abs[\bigg]{ \sum_{k=0}^{n-1} u_k }^2
\le \frac {n+h}{h+1} \sum_{k=0}^{n-1} \abs*{ u_k }^2
+
\frac {2(n+h)}{(h+1)^2} 
\sum_{\ell=1}^h (h+1-\ell)
\abs[\bigg]{ \sum_{k=0} ^{n-\ell -1} \overline{u_{k+\ell}} u_k }.
$$
We first decompose for each  $n$
 $$
 \frac{1}{n}\sum_{k=0}^{n-1}
e^{i \omega k} \varphi\big(T^k(c)\big)=
 \frac{1}{n}\sum_{k=0}^{n-1}
e^{i \omega k} \Big( \varphi\big(T^k(c)\big)-\int \varphi\, d\mu \Big)
+ \frac{\int \varphi\, d\mu}{n} \, \sum_{k=0}^{n-1}
e^{i \omega k} .
$$
For any $\omega\ne0$ we have $\lim_{n \to \infty} \frac{1}{n} \sum_{k=0}^{n-1}
e^{i \omega k}=\lim_{n \to \infty} \frac{1}{n} \frac{1-e^{i n \omega}}{1-e^{i\omega}} 
=0$. We can thus assume $\int \varphi\, d\mu=0$.
Since $\abs*{e^{i\omega k}} = 1$  the van 
der Corput bound implies
\begin{align*}
& \abs[\bigg]{ \frac{1}{n}\sum_{k=0}^{n-1}
e^{i \omega k} \varphi\big(T^k(c)\big) }^2 \le 
\frac {n+h}{n^2(h+1)} \sum_{k=0}^{n-1} \abs[\big]{ \varphi\big(T^k(c)\big)}^2 \\[1ex]
&\qquad\qquad\qquad\qquad
+ \frac {2(n+h)}{n^2(h+1)^2} \sum_{\ell=1}^h (h+1-\ell)
\abs[\Bigg]{ \sum_{k=0} ^{n-\ell -1} \overline{ \varphi\big(T^{k+\ell}(c)\big)} 
  \varphi\big(T^k(c)\big) }.
\end{align*}
Now, on the one hand
$$ \frac {n+h}{n^2(h+1)} \sum_{k=0}^{n-1} \abs[\big]{ \varphi\big(T^k(c)\big) }^2
\le  \frac {2}{n(h+1)} \sum_{k=0}^{n-1} \abs[\big]{ \varphi\big(T^k(c)\big) }^2. 
$$
Since $c$ is Birkhoff typical and $\abs{\varphi}^2$ is continuous,
we have  
$$\lim_{n \to \infty}
\frac {1}{n} \sum_{k=0}^{n-1} \abs[\big]{ \varphi\big(T^k(c)\big) }^2=
 \int \abs{\varphi}^2\, d\mu.
$$ 
Therefore for each
 $\epsilon >0$ and all large enough $n$ (depending on
 $\epsilon$), we have for all $h$ between $1$ and $n-1$
\begin{equation}
\label{vC1} \frac {2}{n(h+1)} \sum_{k=0}^{n-1} \abs[\big]{ \varphi\big(T^k(c)\big) }^2
\le  \frac {2}{h+1}  \Big( \int \abs{\varphi}^2\, d\mu + \epsilon \Big).
\end{equation}
On the other hand
%
\begin{multline*}
%
\frac {2(n+h)}{n^2(h+1)^2} 
\sum_{\ell=1}^h (h+1-\ell)
\abs[\Bigg]{ \sum_{k=0} ^{n-\ell -1} \overline{ \varphi\big(T^{k+\ell}(c)\big)} 
 \varphi\big(T^k(c)\big) } 
\\[1ex]
%
%
\le \frac {4}{n(h+1)} \sum_{\ell=1}^h 
\abs[\Bigg]{ \sum_{k=0} ^{n-\ell -1} \overline{ \varphi\big(T^{k+\ell}(c)\big)} \cdot
 \varphi\big(T^k(c)\big) }.
\end{multline*}
%
Since $\varphi$ is continuous, the function
 $\overline {\varphi\circ T^\ell}\cdot  \varphi$ is continuous for each $\ell \ge 1$.
 Since $c$ is Birkhoff typical, we get for all $\ell\ge 1$
 $$ 
\lim_{n \to \infty}
\frac {1}{n}  
  \sum_{k=0} ^{n-\ell -1} \overline{ \varphi\big(T^{k+\ell}(c)\big)} 
 \varphi\big(T^k(c)\big) =
 \int  \overline{ \varphi\circ T^{\ell}} \cdot 
 \varphi \, d\mu .
 $$
 Thus, for each fixed $\epsilon >0$
and  $h\ge 1$, and all large enough $n>h$  (depending 
on $h$ and $\epsilon$) 
$$
 \frac {4}{n(h+1)} 
\sum_{\ell=1}^h 
\abs[\Bigg]{ \sum_{k=0} ^{n-\ell -1} \overline{ \varphi\big(T^{k+\ell}(c)\big)} \cdot
 \varphi\big(T^k(c)\big)} \le
 \frac{4}{h+1}
 \Bigg( \sum_{\ell=1}^h 
 \abs[\Big]{ \int  \overline{ \varphi\circ T^{\ell}} \cdot 
 \varphi \, d\mu } + \epsilon \Bigg).
 $$
 Since $\mu$ is mixing and
  $\int \varphi\, d\mu=0$, we get
$$
\lim_{\ell \to \infty} \abs[\Big]{ \int \overline{ \varphi\circ T^{\ell}} \cdot 
 \varphi \, d\mu }
= \abs[\Big]{ \int \varphi\, d\mu\cdot \int \bar \varphi\, d\mu } =0,
$$
and therefore
 $$
 \lim_{h \to \infty} \frac{1}{h+1}
 \sum_{\ell=1}^h
 \abs[\Big]{ \int  \overline{ \varphi\circ T^{\ell}} \cdot 
 \varphi \, d\mu } = 0.
 $$
Thus, for each $\epsilon >0$, 
 all large enough $h$  (depending on $\epsilon$), all
 large enough $n>h$  (depending on
$h$ and $\epsilon$) we get
\begin{equation}\label{vC2} \frac {4}{n(h+1)} 
\sum_{\ell=1}^h 
\abs[\Bigg]{ \sum_{k=0} ^{n-\ell -1} \overline{ \varphi\big(T^{k+\ell}(c)\big)} \cdot
 \varphi\big(T^k(c)\big) } \le  8 \epsilon .
 \end{equation}
 Take  $\epsilon$ small, $h(\epsilon)$ large,
$n(h,\epsilon)$ even larger, and use \eqref{vC1} and \eqref{vC2}. 
\end{proof} 

 \bibliographystyle{alpha}

\enddocument